\title{New composition products for complex harmonic functions, the dynamic with respect to these and composition operators induced}
\author{ {\bf{Luis E. Ben\'{\i}tez$^{a}$} and \bf{Ra\'ul Felipe$^{b}$}}\\ \\
        $^{a}$ Departamento de Matem\'{a}ticas y Estad\'{\i}stica \\
        Universidad de C\'{o}rdoba \\
        Carrera $6$, No. $76-103$, Monter\'{\i}a, Colombia \\
        luisbenitezb@correo.unicordoba.edu.co \\
        $^{b}$ CIMAT \\
        Callej\'on Jalisco s/n Mineral de Valenciana \\
        Guanajuato, Gto., M\'exico.\\
        raulf@cimat.mx}
\date{}
\newcommand{\CC}{\mathbb{C}}
\newtheorem{theorem}{\textbf{Theorem}}
\newtheorem{corollary}[theorem]{\textbf{Corollary}}
\newtheorem{example}{\textbf{Example}}
\newtheorem{definition}[theorem]{\textbf{Definition}}
\newtheorem{lemma}[theorem]{\textbf{Lemma}}
\newtheorem{proposition}[theorem]{\textbf{Proposition}}
\newtheorem{remark}[theorem]{Remark}
\newenvironment{proof}[1][Proof]{\noindent\textbf{#1.} }{\ \rule{0.5em}{0.5em}}
\begin{document}
\maketitle

\begin{abstract}
In this work we propose composition products in the class of complex harmonic functions so that the composition of two such functions is
again a complex harmonic function. From here we begin the study of the iterations of the functions of this class showing briefly their potential
to be a topic of future research. In parallel, we define and study composition operators whose symbols belong to a Hardy space of complex harmonic functions also introduced in the work. All this constitutes a previous work for the research of semigroups and evolutionary families composed of complex
harmonic functions.
\end{abstract}

\noindent{\it{2020 Mathematics Subject Classification (MSC2020):}}
Primary 31A05; Secondary 30H10.\\

\noindent{\it{Key words:}}\,\,\,\,Complex harmonic functions, complex harmonic dynamics, composition operators.\\

\section{Introduction}

Unlike what happens in the class of analytical functions which is closed under the customary composition, the usual composition
product of two harmonic functions is not in general a harmonic function. This fact causes that some problems which are studied for
a long time in the space of analytical functions do not make sense or are difficult to translate and treat on the set of complex
harmonic functions with the tools of the complex variable. We give two typical examples: the theory of linear composition operator
whose symbols are complex harmonic functions and the corresponding theory of iterations for complex harmonic functions.

In this note, we will introduce two composition products in the space complex harmonic function in order to overcome these difficulties.
From these products, in this work we begin the study of the problems before mentioned in this class of complex harmonic functions.

A continuous function $f=u+iv$ defined in a domain $D\subset \CC$ is harmonic in $D$ if $u$ and $v$
are real harmonic functions in $D$ which are not necessarily conjugate. If $D$ is simply connected domain we can write $f=h+\overline{g}$,
where $h$ and $g$ are analytic functions on $D$ and $\overline{g}$ denotes the function $z\longrightarrow \overline{g(z)}$.
In this paper, we should only consider complex harmonic functions $f=h+\overline{g}$ for which $g\neq 0$.
From now on, all plane domain $D$ must be considered simply connected.

It is clear that we can consider $f$ as a function of two variable $z$ and $\overline{z}$, in this case $\partial_{z}f(z)=\partial_{z}h(z)$
and $\partial_{\overline{z}}f(z)=\overline{\partial_{z}g(z)}=\partial_{\overline{z}}\overline{g}(\overline{z})$. Let us denote by $H(D)$
the set of all complex harmonic functions defined on $D$. Obviously $f\in H(D)$ if and only if $\partial^{2}_{z\overline{z}}f=0$. Observe also
that if for $f=h+\overline{g}\in H(D)$ we have $g=0$ then $f$ is analytic, in this case, $u$ and $v$ satisfy the Cauchy-Riemann equations.
We can denote by $A(D)$ the set of all analytic functions on $D$. For $f=h+\overline{g}\in H(D)$ we say that $h$ is its analytic part
while that $g$ will be called the co-analytic part. In the opinion of the authors the basic reference for the study of complex harmonic
functions is \cite{duren}.

The theory of iterations of a function $f(z)$ of the complex variable $z$ studies the sequence the `iterates' $\{f_{n}(z)\}$ defined by
\begin{equation*}
f_{0}(z)=z,\,\,f_{1}(z)=f(z),\,\,\,f_{n+1}(z)=f_{1}(f_{n}(z)),\,\,\,\,\,\,\,\,\,\,\,\,\,n=0,1,2,\cdots .
\end{equation*}

In the general theory of iterations, the fixed points of $f(z)$ play a fundamental role both in local theories and others which deal with the behavior
of the sequence $\{f_{n}(z)\}$ and the solutions of some functional equations in the neighbourhood of fixed points. In this work we begin
the theory of iteration of complex harmonic functions.

On the other hand, an essential objective of the theory of linear composition operators is to obtain information from the operators from
the properties of the function class to which their symbols belong. It is very convenient, for example, that this class of functions has
a Banach or Hilbert space structure (this space usually coincides with the class on which the operators act).

This work can be considered a first step in the construction of semigroups and evolutionary families in the space of complex harmonic functions.
Indeed, it is well know the fundamental role of the composition operators and the conjugation techniques initially arise in the theory of iterations for analytical
functions within this topic in the class $A(D)$, research which we wish to undertake in a forthcoming paper to the class $H(D)$. This program also includes inserting the
theory of integrable systems in this context.

\section{Dynamic of complex harmonic functions}

We introduce a first composition law between harmonic functions for which the function $z+\overline{z}\in H(\CC)$ should be the identity
function.

\begin{definition}Let $f_{1}=h_{1}+\overline{g_{1}}$ and  $f_{2}=h_{2}+\overline{g_{2}}$ be two functions of $H(D_{1})$
and $H(D_{2})$ respectively, such that, $h_{2}(D_{2})\subset D_{1}$ and $g_{2}(D_{2})\subset D_{1}$. We define a harmonic composition product
of $f_{1}$ with $f_{2}$ which is denoted by $f_{1}\circleddash f_{2}$ in the following form
\begin{equation}\label{rf1}
f_{1}\circleddash f_{2}(z)=h_{1}\circ h_{2}(z)+\overline{g_{1}\circ
g_{2}(z)},\,\,\,\,\,\hbox{for all $z\in D_{2}$},
\end{equation}
where $\circ$ denotes the usual composition between analytic functions. The composition defined by (\ref{rf1}) will be
called \textbf{direct harmonic composition}.
\end{definition}

Let $H_{s}(D)$ be the subset of $H(D)$ consists of whose functions $f=h+\overline{g}$ for which $h:D\longrightarrow D$ and
$g:D\longrightarrow D$. Note that $z+\overline{z}\in H_{s}(D)$.

Our first result is
\begin{proposition}$(H_{s}(D),\circleddash)$ is a unital semigroup and the function $z+\overline{z}$ is its unit.
\end{proposition}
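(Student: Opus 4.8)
The plan is to verify directly the three monoid axioms, leaning entirely on the corresponding properties of ordinary composition of analytic self-maps of $D$.

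First I would check that $\circleddash$ is a well-defined internal operation on $H_{s}(D)$. If $f_{1}=h_{1}+\overline{g_{1}}$ and $f_{2}=h_{2}+\overline{g_{2}}$ both lie in $H_{s}(D)$, then $h_{2}(D)\subset D$ and $g_{2}(D)\subset D$, so the hypotheses of the Definition are met with $D_{1}=D_{2}=D$ and $f_{1}\circleddash f_{2}$ is defined. Moreover $h_{1}\circ h_{2}$ and $g_{1}\circ g_{2}$ are analytic (being compositions of analytic maps) and each sends $D$ into $D$; hence $f_{1}\circleddash f_{2}=h_{1}\circ h_{2}+\overline{g_{1}\circ g_{2}}\in H_{s}(D)$, which gives closure. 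Next, for associativity take $f_{i}=h_{i}+\overline{g_{i}}\in H_{s}(D)$, $i=1,2,3$: both $(f_{1}\circleddash f_{2})\circleddash f_{3}$ and $f_{1}\circleddash(f_{2}\circleddash f_{3})$ have analytic part $h_{1}\circ h_{2}\circ h_{3}$ and co-analytic part $\overline{g_{1}\circ g_{2}\circ g_{3}}$, because ordinary composition is associative; hence the two sides coincide. Finally, write $e(z)=z+\overline{z}$, i.e. $e=\mathrm{id}_{D}+\overline{\mathrm{id}_{D}}$, which lies in $H_{s}(D)$ as already observed; for any $f=h+\overline{g}\in H_{s}(D)$ one has $f\circleddash e=h\circ\mathrm{id}_{D}+\overline{g\circ\mathrm{id}_{D}}=h+\overline{g}=f$ and likewise $e\circleddash f=\mathrm{id}_{D}\circ h+\overline{\mathrm{id}_{D}\circ g}=f$, so $e$ is a two-sided identity.

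The only genuinely delicate point, and the one I would settle first, is that $\circleddash$ is defined through a chosen representation $f=h+\overline{g}$ of a complex harmonic function, which on a simply connected domain is unique only up to the replacement $(h,g)\mapsto(h+c,\,g-\overline{c})$ for a constant $c$. I would handle this either by fixing once and for all a normalization of the decomposition (so that $H_{s}(D)$ is really a set of such normalized pairs, and $\circleddash$ descends to it), or by noting that the self-map constraints $h(D)\subset D$, $g(D)\subset D$ together with such a normalization pin down $h$ and $g$ enough for the computations above to be unambiguous. With that understood, the rest is pure bookkeeping with the identities $(\varphi\circ\psi)\circ\chi=\varphi\circ(\psi\circ\chi)$ and $\varphi\circ\mathrm{id}_{D}=\mathrm{id}_{D}\circ\varphi=\varphi$ for analytic self-maps of $D$, applied separately to the analytic and co-analytic parts; I do not expect any real obstacle beyond that.
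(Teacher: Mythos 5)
Your proof is correct and follows essentially the same route as the paper's: the paper likewise reduces associativity to that of ordinary composition of analytic functions applied separately to the analytic and co-analytic parts, and checks the unit $z+\overline{z}$ directly. Your additional remarks on closure and on the well-definedness of the decomposition $f=h+\overline{g}$ (unique only up to the replacement $(h,g)\mapsto(h+c,\,g-\overline{c})$) are careful points the paper passes over silently, but they do not change the argument.
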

\begin{proof}The proof follows taking into account that the usual composition for analytic functions is an associative operation. On other
hand, it is trivial to verify that $z+\overline{z}$ is the unit with respect to $\circleddash$.
\end{proof}

Although for a $f\in H_{s}(D)$ fixed, we can not guarantee in general that $f(D)\subset D$, we call to the elements of $H_{s}(D)$
harmonic $\circleddash$-automorphisms of $D$ with respect to the composition $\circleddash$.

In this paper, a univalent harmonic function is a function $f=h+\overline{g}\in H_{s}(D)$ for which both $h$ and $g$ are univalent analytic functions.
Then $f^{-1}=h^{-1}+\overline{g^{-1}}\in H_{s}(D)$ and $f\circleddash f^{-1}=z+\overline{z}=f^{-1}\circleddash f$.

We introduce a second composition product between harmonic maps.

\begin{definition}Let $f_{1}=h_{1}+\overline{g_{1}}$ and  $f_{2}=h_{2}+\overline{g_{2}}$ be two functions of $H(D_{1})$
and $H(D_{2})$ respectively, such that, $h_{2}(D_{2})\subset D_{1}$ and $g_{2}(D_{2})\subset D_{1}$. We define other harmonic composition
$f_{1}\circledcirc f_{2}$ between two functions $f_{1}$ and $f_{2}$ in the form
\begin{equation}\label{rf11}
f_{1}\circledcirc f_{2}(z)=h_{1}\circ g_{2}(z)+\overline{g_{1}\circ
h_{2}(z)},\,\,\,\,\,\hbox{for all $z\in D_{2}$},
\end{equation}
where as before $\circ$ denotes the usual composition of analytic functions. Next, this composition should be
called the \textbf{crossed harmonic composition}. Here, the function $z+\overline{z}$ is the identity.
\end{definition}

Notice that the composition $\circledcirc$ is not associative.

\begin{example}A M\"{o}bius harmonic transformation is a harmonic function of the form
\begin{equation*}
R(z)=\frac{az+b}{cz+d}+\overline{\frac{lz+n}{sz+t}}=T_{A}(z)+\overline{T_{B}(z)},
\end{equation*}
where
\begin{equation*}
A=\left(
    \begin{array}{cc}
      a & b \\
      c & d \\
    \end{array}
  \right),\,\,\,\,B=\left(
                      \begin{array}{cc}
                        l & n \\
                        s & t \\
                      \end{array}
                    \right)
\end{equation*}
are matrices of $GL_{2}(\mathbb{C})$.
Let $R_{1}(z)=T_{A_{1}}(z)+\overline{T_{B_{1}}(z)}$ and $R_{2}(z)=T_{A_{2}}(z)+\overline{T_{B_{2}}(z)}$ be two M\"{o}bius harmonic transformations, then
\begin{equation}\label{rf15}
R_{1}(z)\circleddash R_{2}(z)=T_{A_{1}}(z)\circ T_{A_{2}}(z)+\overline{T_{B_{1}}(z)\circ T_{B_{2}}(z)}=T_{A_{1}A_{2}}(z)+\overline{T_{B_{1}B_{2}}(z)},
\end{equation}
and
\begin{equation}\label{rf16}
R_{1}(z)\circledcirc R_{2}(z)=T_{A_{1}}(z)\circ T_{B_{2}}(z)+\overline{T_{B_{1}}(z)\circ T_{A_{2}}(z)}=T_{A_{1}B_{2}}(z)+\overline{T_{B_{1}A_{2}}(z)}.
\end{equation}
\end{example}

\begin{definition}
We say that a function $f_{1}\in H_{s}(D)$ is conjugated with respect to the harmonic composition $\circ_{H}$ to a function $f_{2}\in H_{s}(D)$
if there exists a univalent harmonic function $f\in H_{s}(D)$ such that
\begin{equation}\label{rf2}
f\circ_{H}f_{1}=f_{2}\circ_{H}f,
\end{equation}
where $\circ_{H}\in\{\circleddash,\circledcirc\}$.
\end{definition}

In what follows, if $f=h+\overline{g}$ then the function $\overline{f}=g+\overline{h}$ will be called the conjugate complex harmonic function of $f$.
Let $f=h+\overline{g}\in H_{s}(D)$ given, we define $f^{n,\circleddash}=f\circleddash f^{n-1,\circleddash}$ and
$f^{n,\circledcirc}=f^{n-1,\circledcirc}\circledcirc f$ for $n\geq 2$, where $f^{1,\circleddash}=f^{1,\circledcirc}=f$
and $f^{0,\circleddash}(z)=z+\overline{z}$. Then
\begin{equation}\label{rf5}
f^{k,\circleddash}(z)=h^{k}(z)+\overline{g^{k}(z)},\,\,\,\,\,\,f^{k,\circledcirc}(z)=h(g^{k-1}(z))+\overline{g(h^{k-1}(z))}
=(h+\overline{g})\circledcirc f^{k-1,\circleddash}(z)=f\circledcirc f^{k-1,\circleddash}(z),
\end{equation}
for all $z\in D$ (or what is the same for all $z+\overline{z}\in D+\overline{D}$) and $1\leq k$. We use $h^{k}$ to represent usual
composed with itself $k$ times of the function $f$ and the same for $g^{k}$, for  all $k$. Also, here and throughout the section $\overline{D}=\{\overline{z}|\,z\in D\}$.

We need to introduce the subset of constant complex harmonic functions: $\mathbb{C}_{H}=\mathbb{C}+\overline{\mathbb{C}}$ and call its elements
the harmonic complex numbers. Suppose that $f=h+\overline{g}\in H_{s}(D)$, we say that the sequence $\{f^{n,\circleddash}(z)\}$ is
convergent in the point $z+\overline{z}\in D+\overline{D}$, if by definition, there exits a harmonic complex number $\mu+\overline{\omega}$ such that $f^{n,\circleddash}(z)\longrightarrow \mu+\overline{\omega}$.
It is easy to see that if $\{f^{n,\circleddash}(z)\}$
is convergent to $\mu+\overline{\omega}$ in $z+\overline{z}$ (or $z\in D$), then $\{(\overline{f})^{n,\circleddash}(z)\}$ is convergent to $\omega+\overline{\mu}$ in $z+\overline{z}\in D+\overline{D}$ and vice versa.

\begin{example}Assume that $f=h+\overline{g}\in H_{s}(D)$ is such that $\{h^{n}(z)\}$ converges to a finite point $\mu$ and
$\{g^{n}(z)\}$ also converges to a finite point $\omega$ for some $z\in D$, clearly $\mu, \omega\in cl(D)$, where $cl(D)$ stands for the closure of $D$.
It is well known that $\mu$
and $\omega$ should be fixed points of $h$ and $g$ respectively. Then, the sequence $\{f^{n,\circleddash}(z)\}$ converges to
$\mu+\overline{\omega}$, while $\{f^{n,\circledcirc}(z)\}$ is convergent to $h(\omega)+\overline{g(\mu)}$.
\end{example}

We give the following useful general result

\begin{proposition}Let us suppose that $f=h+\overline{g}\in H_{s}(D)$, then we have
\begin{itemize}
  \item if for $z\in D$ (or same if for $z+\overline{z}\in
  D+\overline{D}$) the sequence $\{f^{n,\circleddash}(z)\}$ is convergent to $\mu+\overline{\omega}$, then $\mu+\overline{\omega}=h(\mu)+\overline{g(\omega)}$,
  \item assuming that $\{f^{n,\circleddash}(z)\}$ is convergent to $\mu+\overline{\omega}$ in a point $z\in D$ given, then $\{f^{n,\circledcirc}(z)\}$
  converges to $h(\omega)+\overline{g(\mu)}$ in $z$. If $f$ is univalent then $\{f^{n,\circledcirc}(z)\}$ converges in $z\in D$ if and only if $\{f^{n, \circleddash}(z)\}$ converges in this point.
\end{itemize}
\end{proposition}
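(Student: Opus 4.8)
The plan is to reduce everything to the behaviour of the two scalar orbits $\{h^{n}(z)\}$ and $\{g^{n}(z)\}$ under ordinary iteration of analytic self-maps of $D$, and then to read off the harmonic statements from the explicit coordinatewise formulas in \eqref{rf5}.

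First I would unpack the notion of convergence of $\{f^{n,\circleddash}(z)\}$: since $f^{n,\circleddash}(z)=h^{n}(z)+\overline{g^{n}(z)}$ by \eqref{rf5}, convergence of this sequence to the harmonic complex number $\mu+\overline{\omega}$ means exactly that $h^{n}(z)\to\mu$ and $g^{n}(z)\to\omega$ in $\CC$ (this is also the reading that makes the remark on $\overline{f}$ consistent). For the first bullet I would then use that $h$ is analytic, hence continuous, on $D$: from $h^{n+1}(z)=h(h^{n}(z))$ and $h^{n}(z)\to\mu$ one gets $h^{n+1}(z)\to h(\mu)$, while $h^{n+1}(z)\to\mu$ since it is (the tail of) the same convergent sequence; therefore $h(\mu)=\mu$. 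The identical argument gives $g(\omega)=\omega$, and adding the two identities yields $\mu+\overline{\omega}=h(\mu)+\overline{g(\omega)}$.

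For the second bullet I would again invoke \eqref{rf5}, which reads $f^{n,\circledcirc}(z)=h(g^{n-1}(z))+\overline{g(h^{n-1}(z))}$. From $g^{n-1}(z)\to\omega$, $h^{n-1}(z)\to\mu$ and the continuity of $h$ at $\omega$ and of $g$ at $\mu$, I obtain $h(g^{n-1}(z))\to h(\omega)$ and $g(h^{n-1}(z))\to g(\mu)$, hence $f^{n,\circledcirc}(z)\to h(\omega)+\overline{g(\mu)}$. For the converse equivalence when $f$ is univalent, I would use that then $h$ and $g$ are univalent analytic maps, so $h^{-1}$ and $g^{-1}$ are analytic, in particular continuous, on their images. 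If $\{f^{n,\circledcirc}(z)\}$ converges, say $h(g^{n-1}(z))\to\alpha$ and $g(h^{n-1}(z))\to\beta$, then applying $h^{-1}$ and $g^{-1}$ gives $g^{n-1}(z)\to h^{-1}(\alpha)$ and $h^{n-1}(z)\to g^{-1}(\beta)$; shifting the index shows $\{g^{n}(z)\}$ and $\{h^{n}(z)\}$ converge as well, so $f^{n,\circleddash}(z)=h^{n}(z)+\overline{g^{n}(z)}\to g^{-1}(\beta)+\overline{h^{-1}(\alpha)}$, i.e. $\{f^{n,\circleddash}(z)\}$ converges. Combined with the forward implication already proved, this gives the stated equivalence.

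The step I expect to be the main obstacle is the regularity tacitly assumed when writing $h(\mu)$, $g(\omega)$, $h^{-1}(\alpha)$, $g^{-1}(\beta)$: a priori $\mu,\omega$ lie only in the closure $cl(D)$ and $\alpha,\beta$ only in the closure of the corresponding image, so the continuity arguments above are literally valid only when these limit points are interior. To make the proof fully rigorous I would either restrict the statement to that case, or appeal to the classical theory of iteration of analytic self-maps (Denjoy--Wolff type results and the behaviour of boundary fixed points and angular limits) already alluded to in Example~2, which still forces the functional relation $h(\mu)=\mu$, in the appropriate angular-limit sense, when a convergent orbit $\{h^{n}(z)\}$ tends to a boundary point. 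Beyond this regularity issue, the argument is a routine transfer of one-variable dynamics through the componentwise identities \eqref{rf5}.
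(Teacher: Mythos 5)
Your proof is correct and follows essentially the same route as the paper's: both pass to the limit in the recursions $f^{n+1,\circleddash}(z)=f\circleddash f^{n,\circleddash}(z)$ and $f^{n,\circledcirc}(z)=f\circledcirc f^{n-1,\circleddash}(z)$ (you simply carry this out componentwise in $h$ and $g$), and both handle the univalent case by applying $h^{-1}+\overline{g^{-1}}$ to the crossed iterates to recover the $\circleddash$-orbit. The regularity caveat you raise about the limits $\mu,\omega$ possibly lying on the boundary is genuine, but the paper's own proof silently assumes continuity of $h,g$ (and their inverses) at these limit points, exactly as you do before flagging the issue.
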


\begin{proof}From the first equality of (\ref{rf5}) it follows that $f^{n+1,\circleddash}(z)=f\circleddash f^{n,\circleddash}(z)$ for all $n\in \mathbb{N}$, then passing to the limit on both sides, we obtain $\mu +\overline{\omega}=f\circleddash (\mu +\overline{\omega})=h(\mu)+\overline{g(\omega)}$ which proves the first statement. Also, for the second equality of (\ref{rf5}) one has $f^{n,\circledcirc}(z)=f\circledcirc f^{n-1,\circleddash}(z)$ for all $1\leq n$ and for our assumptions $\{f^{n,\circleddash}(z)\}$ is convergent to $\mu+\overline{\omega}$ which implies that $\{f^{n,\circledcirc}(z)\}$ converges to $(h+\overline{g})\circledcirc (\mu+\overline{\omega})=
h(\omega)+\overline{g(\mu)}$. Finally, observe that if $f=h+\overline{g}$ is univalent, then further we have $(h^{-1}+\overline{g^{-1}})\circleddash f^{n,\circledcirc}(z)=(\overline{f})^{n,\circleddash}(z)$. It shows that $\{f^{n,\circledcirc}(z)\}$ converges if and only if
$\{f^{n, \circleddash}(z)\}$ converges.
\end{proof}

\begin{definition}The complex harmonic constant $\mu+\overline{\omega}$ is said to be a finite $\mathfrak{h}$-fixed point for the complex harmonic function $f=h+\overline{g}\in H_{s}(D)$ if this satisfies the equation $\mu+\overline{\omega}=h(\mu)+\overline{g(\omega)}$.
\end{definition}

Suppose that $f=h+\overline{g}\in H_{s}(D)$ such that $h(\mu)=\mu$ and $g(\omega)=\omega$, then $\mu+\overline{\omega}=h(\mu)+\overline{g(\omega)}$.
Thus $\mu+\overline{\omega}$ is a $\mathfrak{h}$-fixed point of $f$. These types of points will be called induced $\mathfrak{h}$-fixed points and
they constitute probably isolated $\mathfrak{h}$-fixed points. In particular, if $g(0)=0$ then all the usual fixed points of $h$ are
$\mathfrak{h}$-fixed points of $f=h+\overline{g}$. In the same way, if $h(0)=0$ then each fixed point of $g$ is $\mathfrak{h}$-fixed point of $f=h+\overline{g}$. It shows that $0$ is a $\mathfrak{h}$-fixed point of $f=h+\overline{g}$ whenever $h(0)=g(0)=0$.

In the case $D=\mathbb{C}$ is reasonable to include $z=\infty$ in the analysis of the $\mathfrak{h}$-fixed points of a complex harmonic functions. We say that
\begin{enumerate}
  \item $\mu+\overline{\infty}$ is an infinite $\mathfrak{h}$-fixed point of $f=h+\overline{g}$ if $h(\mu)=\mu$ and $g(\infty)=\infty$,
  \item $\infty+\overline{\omega}$ is an infinite $\mathfrak{h}$-fixed point of $f=h+\overline{g}$ if $h(\infty)=\infty$ and $g(\omega)=\omega$,
  \item $\infty+\overline{\infty}$ is an infinite $\mathfrak{h}$-fixed point of $f=h+\overline{g}$ if $h(\infty)=\infty$ and $g(\infty)=\infty$.
\end{enumerate}

For example, for a M\"{o}bius harmonic function $R(z)=T_{A}(z)+\overline{T_{B}(z)}$, we can have up to $4$ $\mathfrak{h}$-fixed points. There are several possibilities which could be showed and analyzed from the point of view of the convergence of the sequence $\{R^{n,\circleddash}(z)\}$.

\begin{itemize}
\item The M\"{o}bius functions $T_{A}(z)$ and $T_{A}(z)$ have a single fixed point:

\begin{itemize}
  \item Both M\"{o}bius functions $T_{A}(z)$ and $T_{B}(z)$ have $\infty$ as their only fixed point, that is, $\infty+\overline{\infty}$ is the unique $\mathfrak{h}$-fixed point. Then, $R(z)=z+\overline{z}$ or
  $R(z)=R_{\beta}(z)=z+\beta +\overline{z}$ where $\beta\neq 0$. Hence, $R^{n,\circleddash}_{\beta}(z)\longrightarrow \infty +\overline{z}=\infty$ for all $z\in \mathbb{C}$,
  \item The sets of fixed points of $T_{A}(z)$ and $T_{B}(z)$ are $FP_{A}=\{\mu\}$ and $FP_{B}=\{\infty\}$ respectively. In this case, one has $R^{n,\circleddash}(z)\longrightarrow \mu+\overline{z}$
  or $R^{n,\circleddash}(z)\longrightarrow \mu+\overline{\infty}=\infty$ for all $z\in\mathbb{C}$,
  \item $FP_{A}=\{\infty\}$ and $FP_{B}=\{\omega\}$, then $R^{n,\circleddash}(z)\longrightarrow z+\overline{\omega}$ or $R^{n,\circleddash}(z)\longrightarrow \infty+\overline{\omega}=\infty$, for all
  $z\in\mathbb{C}$,
  \item For $FP_{A}=\{\mu\}$ and $FP_{B}=\{\omega\}$, we find that $R^{n,\circleddash}(z)\longrightarrow \mu+\overline{\omega}$, for all $z\in \mathbb{C}$.
\end{itemize}

\item In the remaining cases, $R^{n,\circleddash}(z)$ converges for all $z\in\mathbb{C}$ if and only if $\infty$ does not belong to $FP_{A}\cup FP_{B}$
and there are $\mu\in FP_{A}$ and $\omega\in FP_{B}$ such that $h^{n}(z)\longrightarrow \mu$ and $g^{n}(z)\longrightarrow \omega$.
\end{itemize}

For the remainder of this section, we will only work with the composition product $\circleddash$.

\begin{theorem}Let $\mu+\overline{\omega}\in D$ be a $\mathfrak{h}$-fixed point of $f=h+\overline{g}\in H(D)$.
Assume that there exist a neighborhood $V\subset D$ of $\mu+\overline{\omega}$ and two positive constants $\rho_{h}<1$ and $\rho_{g}<1$ such that $h(V)\subset V$, $g(V)\subset V$ and the following inequalities
\begin{equation}\label{rf6}
|h(z)-h(\mu)|<\rho_{h}|z-h(\mu)|,
\end{equation}
\begin{equation}\label{rf7}
|g(z)-g(\omega)|<\rho_{g}|z-g(\omega)|,
\end{equation}
hold for all $z\in V$. Then sequence $\{f^{n,\circleddash}\}_{n\geq 1}$ converges
uniformly to $\mu+\overline{\omega}$ in $V$. In this case, we say that $\mu+\overline{\omega}$ is an
\textbf{attracting $\mathfrak{h}$-fixed point}.
\end{theorem}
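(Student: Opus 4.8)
The plan is to reduce the statement to the two scalar convergence facts that $h^{n}\to\mu$ and $g^{n}\to\omega$ uniformly on $V$, and then reassemble them through the identity $f^{n,\circleddash}=h^{n}+\overline{g^{n}}$ recorded in (\ref{rf5}).

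I would begin by unpacking the hypothesis that $\mu+\overline{\omega}$ is a $\mathfrak{h}$-fixed point: it says $\mu+\overline{\omega}=h(\mu)+\overline{g(\omega)}$, and, reading this equality in $\mathbb{C}_{H}$ (comparing analytic and co-analytic parts), it gives $h(\mu)=\mu$ and $g(\omega)=\omega$. Hence (\ref{rf6}) reads $|h(z)-\mu|<\rho_{h}|z-\mu|$ and (\ref{rf7}) reads $|g(z)-\omega|<\rho_{g}|z-\omega|$ on $V$. Next, since $h(V)\subset V$, a trivial induction gives $h^{n}(V)\subset V$ for every $n$, so all iterates are defined on $V$ and $f^{n,\circleddash}=h^{n}+\overline{g^{n}}$ is a complex harmonic function on $V$. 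I would then prove, by induction on $n$, that
\begin{equation*}
|h^{n}(z)-\mu|<\rho_{h}^{\,n}\,|z-\mu|\qquad\text{for all }z\in V,\ n\ge 1 .
\end{equation*}
The case $n=1$ is (\ref{rf6}); for the inductive step, apply (\ref{rf6}) at the point $h^{n}(z)\in V$ to get $|h^{n+1}(z)-\mu|<\rho_{h}\,|h^{n}(z)-\mu|$ and invoke the inductive hypothesis. The identical argument with (\ref{rf7}) yields $|g^{n}(z)-\omega|<\rho_{g}^{\,n}|z-\omega|$.

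To finish, using $f^{n,\circleddash}(z)-(\mu+\overline{\omega})=(h^{n}(z)-\mu)+\overline{(g^{n}(z)-\omega)}$ and the triangle inequality in $\mathbb{C}_{H}$,
\begin{equation*}
\bigl|f^{n,\circleddash}(z)-(\mu+\overline{\omega})\bigr|\le |h^{n}(z)-\mu|+|g^{n}(z)-\omega|<\rho_{h}^{\,n}|z-\mu|+\rho_{g}^{\,n}|z-\omega|\le \rho^{\,n}M,
\end{equation*}
where $\rho:=\max\{\rho_{h},\rho_{g}\}<1$ and $M:=\sup_{z\in V}\bigl(|z-\mu|+|z-\omega|\bigr)$. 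Since the bound $\rho^{\,n}M$ is independent of $z$ and tends to $0$, the convergence $f^{n,\circleddash}\to\mu+\overline{\omega}$ is uniform on $V$.

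The only point requiring a word of care is the uniformity: the final estimate needs $M<\infty$, i.e.\ $V$ bounded. Since the theorem concerns purely local behavior near the fixed point this is harmless — one may take $V$ to be a small bounded neighborhood — and even without that assumption the same computation gives uniform convergence on every bounded, hence every compact, subset of $V$. Everything else is the routine induction above together with the triangle inequality, so I do not anticipate any substantive obstacle.
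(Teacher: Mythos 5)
Your proof is correct and follows essentially the same route as the paper's: an induction on (\ref{rf6}) and (\ref{rf7}) to get the geometric bounds $|h^{n}(z)-h(\mu)|<\rho_{h}^{n}|z-h(\mu)|$ and $|g^{n}(z)-g(\omega)|<\rho_{g}^{n}|z-g(\omega)|$, followed by the triangle inequality applied to $f^{n,\circleddash}=h^{n}+\overline{g^{n}}$. Your closing remark that uniformity requires $V$ (or the relevant subset) to be bounded is a small point of care that the paper leaves implicit, but it does not change the argument.
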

\begin{proof}From (\ref{rf6}) and (\ref{rf7}) follow that for all $n$ with $2\leq n$ we obtain
\begin{equation}\label{rf8}
|h^{n}(z)-h(\mu)|<(\rho_{h})^{n}|z-h(\mu)|,\,\,\,\,\,\hbox{for all $z\in V$},
\end{equation}
and
\begin{equation}\label{rf9}
|g^{n}(z)-g(\omega)|<(\rho_{g})^{n}|z-g(\omega)|,\,\,\,\,\,\,\,\,\hbox{for any $z\in V$}.
\end{equation}

Then using (\ref{rf8}) and (\ref{rf9}), we have
\begin{align}
|f^{n,\circleddash}(z)-(\mu+\overline{\omega})|&=|(h^{n}(z)-h(\mu))+\overline{(g^{n}(z)-g(\omega))}| \nonumber \\
&<|h^{n}(z)-h(\mu)|+|g^{n}(z)-g(\omega)| \nonumber  \\
&<(\rho_{h})^{n}|z-h(\mu)|+(\rho_{g})^{n}|z-g(\omega)|, \nonumber
\end{align}
consequently, the sequence of harmonic iterates $\{f^{n,\circleddash}\}$ converges uniformly to $\mu+\overline{\omega}$ in $V$.
\end{proof}

\begin{example}Assume now that $U=\{z\in\CC||z|<1\}$ and let us choose arbitrary $\alpha$ and $\beta$ in $U$. Then
$z_{0}=0$ is a attracting $\mathfrak{h}$-fixed point of $f(z)=\alpha z+\beta \overline{z}$.
\end{example}

Now, we present a generalization of a result due to Koenigs (see \cite{carl}, page $31$). Let $\mu+\overline{\omega}$ be a $\mathfrak{h}$-fixed point of $f=h+\overline{g}\in H_{s}(D)$ then
$\lambda=\partial_{z} h(\mu)$ and $\theta=\partial_{z}g(\omega)$ are called the multipliers of $f$.

\begin{theorem}\label{rf11}
Assume that $f=h+\overline{g}\in H_{s}(U)$ has a fixed point in
$z_{0}=0$ (this means that $h(0)=0=g(0)$) with multipliers $\lambda$ and
$\theta$ satisfying $0<|\lambda| <1$, $0<|\theta| <1$. Then there exists a
neighborhood $V\subset U$ of zero and
$\varphi=\varphi_{h}+\overline{\varphi_{g}} \in H(V)$ such that
\begin{equation}\label{rf10}
\varphi \circleddash f=\lambda \varphi_{h}+\overline{\theta
\varphi_{g}}=(\lambda z+\overline{\theta z})\circleddash \varphi,
\end{equation}
and $\varphi(0)=0$.
\end{theorem}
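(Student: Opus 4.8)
The plan is to reduce the statement to the classical Koenigs linearization theorem applied separately to the analytic part $h$ and the co-analytic part $g$, and then to observe that the $\circleddash$-structure of the equation decouples exactly along these two parts. Recall that Koenigs' theorem (the reference \cite{carl}) asserts: if $h$ is analytic near $0$ with $h(0)=0$ and $h'(0)=\lambda$ with $0<|\lambda|<1$, then there is a neighborhood on which the limit $\varphi_{h}(z)=\lim_{n\to\infty}\lambda^{-n}h^{n}(z)$ exists, is analytic, satisfies $\varphi_{h}(0)=0$, $\varphi_{h}'(0)=1$, and solves the Schr\"oder equation $\varphi_{h}\circ h=\lambda\,\varphi_{h}$. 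The same statement applies to $g$ with $\lambda$ replaced by $\theta$, yielding $\varphi_{g}$ on a possibly smaller neighborhood with $\varphi_{g}\circ g=\theta\,\varphi_{g}$.

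First I would invoke Koenigs' theorem for $h$ on a neighborhood $V_{h}$ of $0$ and for $g$ on a neighborhood $V_{g}$ of $0$, and set $V=V_{h}\cap V_{g}$, which is again a simply connected neighborhood of $0$ contained in $U$ (shrinking if necessary so that $h(V)\subset V_{h}$ and $g(V)\subset V_{g}$, which is possible since $h$ and $g$ fix $0$ and are continuous). Define $\varphi=\varphi_{h}+\overline{\varphi_{g}}\in H(V)$; since $\varphi_{h}(0)=\varphi_{g}(0)=0$ we get $\varphi(0)=0$. Now I would simply compute, using the definition (\ref{rf1}) of $\circleddash$ together with the two Schr\"oder equations:
\begin{equation*}
\varphi\circleddash f(z)=\varphi_{h}\circ h(z)+\overline{\varphi_{g}\circ g(z)}=\lambda\,\varphi_{h}(z)+\overline{\theta\,\varphi_{g}(z)}.
\end{equation*}
On the other hand, writing $\ell(z)=\lambda z+\overline{\theta z}$, the definition (\ref{rf1}) gives $\ell\circleddash\varphi(z)=(\lambda\,\mathrm{id})\circ\varphi_{h}(z)+\overline{(\theta\,\mathrm{id})\circ\varphi_{g}(z)}=\lambda\,\varphi_{h}(z)+\overline{\theta\,\varphi_{g}(z)}$, which is the same expression; this establishes (\ref{rf10}). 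One should also check the mild compatibility hypothesis implicit in the definition of $\circleddash$, namely that $h(V)$ and $g(V)$ lie in the domain of $\varphi_{h}$ and $\varphi_{g}$ respectively — but that is exactly what the shrinking step above arranged.

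I do not expect a serious obstacle here, since the crux is really just that $\circleddash$ acts diagonally on the pair $(h,g)$, so the harmonic Schr\"oder equation is nothing more than two independent analytic Schr\"oder equations stacked together. The only point requiring care is the bookkeeping of neighborhoods: Koenigs' construction produces a neighborhood on which the normalized iterates $\lambda^{-n}h^{n}$ converge uniformly, and one must make sure the intersection $V$ is still invariant enough for both compositions $\varphi_{h}\circ h$ and $\varphi_{g}\circ g$ to make sense on all of $V$; this is handled by the standard estimate (as in Theorem~\ref{rf11}'s hypotheses, or directly from $0<|\lambda|,|\theta|<1$) that $h$ and $g$ are eventually contractions near $0$, so a small disk $V$ is mapped into itself by each. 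A secondary remark worth including: the same argument with $\circledcirc$ in place of $\circleddash$ would instead couple $h$ with $\varphi_{g}$ and $g$ with $\varphi_{h}$, which is why the statement is formulated only for $\circleddash$, consistent with the sentence preceding the theorem.
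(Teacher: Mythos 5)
Your proposal is correct and follows essentially the same route as the paper: apply Koenigs' theorem separately to $h$ and $g$, set $V=V_{h}\cap V_{g}$, define $\varphi=\varphi_{h}+\overline{\varphi_{g}}$, and verify (\ref{rf10}) directly from the definition of $\circleddash$. Your extra care about shrinking $V$ so that $h(V)$ and $g(V)$ land in the domains of $\varphi_{h}$ and $\varphi_{g}$ is a small refinement the paper leaves implicit.
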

\begin{proof}By a well known Theorem due to Koenigs (1884) there are two
neighborhoods of zero $V_{h}\subset U$ and $V_{g}\subset U$ and
two functions $\varphi_{h}$ and $\varphi_{g}$ which are analytic in
$V_{h}$ and $V_{g}$ respectively, such that, $\varphi_{h}\circ
h(z)=\lambda \varphi_{h}(z)$ for all $z\in V_{h}$ and
$\varphi_{g}\circ g(z)=\theta \varphi_{g}(z)$ for any $z\in V_{g}$, where $\lambda=\partial_{z} h(0)$ and $\theta=\partial_{z}g(0)$.
Observe that necessarily must be $\varphi_{h}(0)=0=\varphi_{g}(0)$.
Let us define $V=V_{h}\cap V_{g}$ then
$\varphi(z)=\varphi_{h}(z)+\overline{\varphi_{g}(z)}\in H(V)$ and
satisfies (\ref{rf10}).
\end{proof}

If $f$ satisfies the hypothesis of the previous Theorem we must add
that $z_{0}=0$ is also an attracting $\mathfrak{h}$-fixed point since evidently the
conditions (\ref{rf6}) and (\ref{rf7}) hold.

\begin{proposition}Under the hypothesis of the Theorem \ref{rf11}
the sequence of iterates $\{f^{n,\circleddash}\}$ converges uniformly to $z_{0}=0$ on some
neighborhood $V_{0}$.
\end{proposition}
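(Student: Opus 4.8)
The plan is to deduce this directly from the theorem on attracting $\mathfrak{h}$-fixed points proved above, once we have manufactured a single neighbourhood of $0$ that is simultaneously invariant under $h$ and $g$ and on which the required contraction estimates hold. The remark immediately preceding the statement already observes that $(\ref{rf6})$ and $(\ref{rf7})$ are satisfied at $z_{0}=0$; the proof just has to make this precise and produce the common neighbourhood $V_{0}$. Note that for this assertion only $|\lambda|<1$ and $|\theta|<1$ are used, not that the multipliers are nonzero.

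First I would fix real numbers $\rho_{h},\rho_{g}$ with $|\lambda|<\rho_{h}<1$ and $|\theta|<\rho_{g}<1$. Since $h$ is analytic on $U$ with $h(0)=0$ and $\partial_{z}h(0)=\lambda$, the quotient $z\mapsto h(z)/z$ extends analytically across $0$ with value $\lambda$ there, so by continuity there is $r>0$ with the disc $D(0,r)\subset U$ and $|h(z)|\le\rho_{h}|z|$ on $D(0,r)$; in particular $|h(z)|<\rho_{h}|z|$ for $z\neq 0$, which is exactly what $(\ref{rf6})$ with $\mu=0$ (so $h(\mu)=0$) requires. In the same way I would find $s>0$ with $D(0,s)\subset U$ and $|g(z)|<\rho_{g}|z|$ on $D(0,s)\setminus\{0\}$, matching $(\ref{rf7})$ with $\omega=0$.

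Next, set $V_{0}=D(0,r_{0})$ with $r_{0}=\min\{r,s\}$. Then $h(V_{0})\subset D(0,\rho_{h}r_{0})\subset V_{0}$ and $g(V_{0})\subset D(0,\rho_{g}r_{0})\subset V_{0}$, so $V_{0}\subset U$ is a neighbourhood of the $\mathfrak{h}$-fixed point $0=0+\overline{0}$, invariant under both $h$ and $g$, on which $(\ref{rf6})$ and $(\ref{rf7})$ hold. The hypotheses of the attracting $\mathfrak{h}$-fixed point theorem are thus met on $V_{0}$, and it yields the uniform convergence $f^{n,\circleddash}\to 0$ on $V_{0}$; in fact it gives $|f^{n,\circleddash}(z)|<(\rho_{h}^{n}+\rho_{g}^{n})|z|<(\rho_{h}^{n}+\rho_{g}^{n})r_{0}$ for $z\in V_{0}$, whose right-hand side tends to $0$ uniformly in $z$.

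I do not expect a genuine obstacle: the only content is that the two scalar Koenigs-type contractions for $h$ and $g$ can be placed on one common disc about $0$, which is immediate by shrinking, and the only points needing a line of care are choosing $\rho_{h},\rho_{g}$ strictly between the moduli of the multipliers and $1$ (so the strict inequalities, not merely the non-strict ones, hold) and recording $D(0,r_{0})\subset U$. As an alternative one could invoke the conjugacy $(\ref{rf10})$ of Theorem~\ref{rf11}: since $\varphi_{h},\varphi_{g}$ have nonvanishing derivative at $0$, $\varphi=\varphi_{h}+\overline{\varphi_{g}}$ is univalent harmonic near $0$, and iterating $(\ref{rf10})$ gives $\varphi\circleddash f^{n,\circleddash}=\lambda^{n}\varphi_{h}+\overline{\theta^{n}\varphi_{g}}$, whence $f^{n,\circleddash}=\varphi^{-1}\circleddash(\lambda^{n}\varphi_{h}+\overline{\theta^{n}\varphi_{g}})\to\varphi^{-1}\circleddash 0=0$ uniformly near $0$; but the first route is shorter and avoids discussing $\varphi^{-1}$.
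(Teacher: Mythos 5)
Your argument is correct and follows essentially the same route as the paper: both proofs reduce the claim to the fact that $z_{0}=0$ is an ordinary attracting fixed point of $h$ and of $g$ separately, so that $h^{n}$ and $g^{n}$ contract to $0$ on a common neighbourhood and $f^{n,\circleddash}=h^{n}+\overline{g^{n}}$ converges uniformly there. The paper simply cites the classical uniform convergence of $h^{n}$ and $g^{n}$ and intersects the two neighbourhoods, whereas you explicitly build the common invariant disc and feed it into the attracting $\mathfrak{h}$-fixed point theorem via (\ref{rf6})--(\ref{rf7}); this is a more detailed rendering of the same idea, not a different one.
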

\begin{proof}It is clear that $z_{0}=0$ is an ordinary attracting fixed point
of $h$ and $g$. Hence, the iterates $h^{n}$ and $g^{n}$ constitute two sequences which converge
uniformly to $0$ on neighborhoods $V_{1}$ and $V_{2}$
respectively. Since, $f^{n,\circleddash}=h^{n}+\overline{g^{n}}$ it shows that the
sequence if iterates $\{f^{n,\circleddash}\}$ converges uniformly to $z_{0}=0$ in
$V_{0}=V_{1}\cap V_{2}$.
\end{proof}

Let us assume now that $z_{0}=0$ is a $\mathfrak{h}$-fixed point of
$f=h+\overline{g}\in H_{s}(U)$ such that $\lambda=0$ and $0<\theta <1$.
We recall that $h(0)=0=g(0)$, and consider $h(z)$ of the form
\begin{equation}\label{rf12}
h(z)=a_{p}z^{p}+\cdots,\,\,\,\,a_{p}\neq 0,\,\,p\geq 2,
\end{equation}
then by a well known result of Boettcher (1904) (the result can be found in \cite{carl}, page $33$) there exists a
neighborhood of zero $V_{h}$ and an analytic function $\varphi_{h}$
over it such that
\begin{equation}\label{rf13}
\varphi_{h}\circ h(z)=(\varphi (z))^{p},\,\,\,\,\hbox{for any $z\in V_{h}$}.
\end{equation}

Hence, we can enunciate the following result
\begin{theorem}
Assume that $f=h+\overline{g}\in H_{s}(U)$ has a $\mathfrak{h}$-fixed point in $z_{0}=0$ (that is, $h(0)=0=g(0)$) with
multipliers $\lambda$ and $\theta$ satisfying $\lambda=0$, $0<|\theta| <1$. Suppose we are given $h$ by (\ref{rf12}),
then there exists a neighborhood $V$ of zero and
$\varphi=\varphi_{h}+\overline{\varphi_{g}} \in H(V)$ such that
\begin{equation}\label{rf14}
\varphi \circleddash f=(\varphi_{h})^{p}+\overline{\theta
\varphi_{g}}=(z^{p}+\overline{\theta z})\circleddash \varphi,
\end{equation}
where $\varphi_{h}(0)=0$ or $\varphi_{h}(0)$ is a $(p-1)$ th root of unity.
\end{theorem}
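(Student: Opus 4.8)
The plan is to reduce the harmonic statement to two classical one-variable conjugation theorems, applied separately to the analytic part $h$ and to the co-analytic part $g$, and then to reassemble the pieces exactly as in the proof of Theorem \ref{rf11}. First I would treat the co-analytic part: since $g(0)=0$ and the multiplier $\theta=\partial_{z}g(0)$ satisfies $0<|\theta|<1$, Koenigs' theorem (see \cite{carl}) furnishes a neighborhood $V_{g}\subset U$ of the origin, which may be taken invariant under $g$ because $0$ is an attracting fixed point, together with an analytic function $\varphi_{g}$ on $V_{g}$ such that $\varphi_{g}\circ g(z)=\theta\,\varphi_{g}(z)$ for all $z\in V_{g}$, and necessarily $\varphi_{g}(0)=0$.

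Next I would treat the analytic part. The hypothesis $h(z)=a_{p}z^{p}+\cdots$ with $a_{p}\neq 0$, $p\geq 2$, together with $h(0)=0$, says precisely that $0$ is a superattracting fixed point of $h$; this is the regime of Boettcher's theorem (see \cite{carl}, page $33$), which yields a neighborhood $V_{h}\subset U$ of the origin, invariant under $h$, and an analytic function $\varphi_{h}$ on $V_{h}$ with $\varphi_{h}\circ h(z)=(\varphi_{h}(z))^{p}$ for all $z\in V_{h}$. Evaluating this functional equation at $z=0$ gives $\varphi_{h}(0)=\varphi_{h}(h(0))=(\varphi_{h}(0))^{p}$, hence $\varphi_{h}(0)\bigl((\varphi_{h}(0))^{p-1}-1\bigr)=0$, so $\varphi_{h}(0)=0$ or $\varphi_{h}(0)$ is a $(p-1)$th root of unity; the standard Boettcher normalization produces the solution with $\varphi_{h}(0)=0$. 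Note that, unlike in Theorem \ref{rf11}, this forces no constraint $\varphi(0)=0$ on the harmonic conjugacy, since $\varphi(0)=\varphi_{h}(0)+\overline{\varphi_{g}(0)}=\varphi_{h}(0)$.

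Then I would set $V=V_{h}\cap V_{g}$ and define $\varphi(z)=\varphi_{h}(z)+\overline{\varphi_{g}(z)}$, which belongs to $H(V)$ as the sum of an analytic and a co-analytic function. Applying the definition of the direct harmonic composition $\circleddash$ once with $f_{2}=\varphi$ and $f_{1}$ the function $z^{p}+\overline{\theta z}$ (whose analytic part is $w\mapsto w^{p}$ and whose co-analytic part is $w\mapsto\theta w$), and once with $f_{1}=\varphi$, $f_{2}=f$, one obtains
\begin{equation*}
\varphi\circleddash f=\varphi_{h}\circ h+\overline{\varphi_{g}\circ g}=(\varphi_{h})^{p}+\overline{\theta\,\varphi_{g}}=(z^{p}+\overline{\theta z})\circleddash\varphi ,
\end{equation*}
which is the asserted identity (\ref{rf14}).

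The only points requiring care are bookkeeping rather than substance: one must shrink the two neighborhoods produced by Koenigs' and Boettcher's theorems to a common neighborhood on which both functional equations hold and on which the compositions defining $\varphi\circleddash f$ make sense in the sense of the Definition of $\circleddash$ (immediate from continuity and from $h(0)=g(0)=0$, just as in the proof of Theorem \ref{rf11}); and one should observe that the hypothesis $\lambda=0$ is exactly what places $h$ outside the Koenigs regime of Theorem \ref{rf11} and inside the Boettcher regime, replacing the linear factor $\lambda z$ by the power $z^{p}$ in the normal form. No essential new difficulty arises beyond invoking the two classical theorems in tandem.
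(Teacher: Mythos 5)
Your proof is correct and follows exactly the route the paper intends: the paper states this theorem without a written proof, but the preceding paragraph sets up Boettcher's theorem for $h$ precisely so that it can be combined with Koenigs' theorem for $g$ in the same way as in the proof of Theorem \ref{rf11}, which is what you do. Your additional observation that evaluating $\varphi_{h}\circ h=(\varphi_{h})^{p}$ at $z=0$ accounts for the alternative ``$\varphi_{h}(0)=0$ or a $(p-1)$th root of unity'' is a correct and welcome justification of a clause the paper leaves unexplained.
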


Analogous results can be proved in the cases when $0<|\lambda|<1, \theta=0$ and $\lambda=0=\theta$.

\section{Composition operators on the space $H^{2}(\mathbb{D})+\overline{H^{2}(\mathbb{D})}$}

It is the object of this section to investigate the theory of composition operators in the framework of the complex harmonic functions.
The section is organized as follows, first of all we introduce an analogue of Hardy space in the class of complex harmonic functions and give some results for the linear operators
defined in this space, in particular we develop with some depth the theory of linear composition operators whose symbols are complex harmonic functions. We conclude the section
showing the relationship of these composition operators with the corresponding complex harmonic reproducing kernels.

\subsection{The Hardy type space $H^{2}(\mathbb{D})+\overline{H^{2}(\mathbb{D})}$ and its corresponding space $\mathcal{B}(H^{2}(\mathbb{D})+\overline{H^{2}(\mathbb{D})})$}

We recall that the separable Hilbert space $H^{2}(\mathbb{D})$ consists of all analytic functions having power series representations with
square-summable complex coefficients, in other words
$$H^{2}(\mathbb{D})=\left \{f \,| f(z)=\sum_{n=0}^{\infty}a_{n}z^{n},\,\,\sum_{n=0}^{\infty}|a_{n}|^{2}< \infty \right \}.$$

Any function of $H^{2}(\mathbb{D})$ is analytic in the open unit disc $\mathbb{D}$. The inner product on $H^{2}$ is defined to be $\langle f, g\rangle=\sum_{n=0}^{\infty}a_{n}\overline{b_{n}}$
for $f=\sum_{n=0}^{\infty}a_{n}z^{n}$ and $g=\sum_{n=0}^{\infty}b_{n}z^{n}$. Thus, the norm of the vector $f=\sum_{n=0}^{\infty}a_{n}z^{n}$ is $\|f\|=\left ( \sum_{n=0}^{\infty}|a_{n}|^{2}\right )^{\frac{1}{2}}$.
Each analytic function $\phi$ mapping the unit disc into itself defines a composition operator $C_{\phi}: H^{2}(\mathbb{D})\longrightarrow H^{2}(\mathbb{D})$ of the form $(C_{\phi}f)(z)=f(\phi(z))$ for all $f\in H^{2}(\mathbb{D})$
and $C_{\phi}\in \mathcal{B}(H^{2}(\mathbb{D}))$ where $\mathcal{B}(H^{2}(\mathbb{D}))$ is the space of all bounded linear operators of $H^{2}(\mathbb{D})$ into
$H^{2}(\mathbb{D})$. The function $\phi$ is called the symbol of $C_{\phi}$. Moreover, we recall that
$$\|C_{\phi}\|\leq \sqrt{\frac{1+|\phi(0)|}{1-|\phi(0)}},$$
in particular if $\phi(0)=0$ then $\|C_{\phi}\|=1$.

We use the symbol $HH^{2}(\mathbb{D})$ to denote the space of harmonic complex functions $f+\overline{g}$ where $f,g\in H^{2}(\mathbb{D})$.
It is clear that
$HH^{2}(\mathbb{D})$ is a vector spaces. Moreover, it is possible to introduced in $HH^{2}(\mathbb{D})$ an inner product, more exactly
one has

\begin{lemma}$HH^{2}(\mathbb{D})$ is a Hilbert space with respect to the following inner product
\begin{equation}\label{s1-1}
(a+\overline{b},c+\overline{d})_{HH^{2}(\mathbb{D})}=(a,c)_{H^{2}(\mathbb{D})}+\overline{(b,d)_{H^{2}(\mathbb{D})}}=
(a,c)_{H^{2}(\mathbb{D})}+(d,b)_{H^{2}(\mathbb{D})},
\end{equation}
for all $a+\overline{b}, c+\overline{d}\in HH^{2}(\mathbb{D})$.
\end{lemma}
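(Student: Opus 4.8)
The plan is to verify that the sesquilinear form $(\cdot,\cdot)_{HH^{2}(\mathbb{D})}$ defined by \eqref{s1-1} is a genuine inner product and that $HH^{2}(\mathbb{D})$ is complete for the norm it induces, reducing every step to the already-known Hilbert space structure of $H^{2}(\mathbb{D})$. At the outset I would record two small points. First, the two expressions for the form in \eqref{s1-1} agree automatically, since the $H^{2}$-inner product is conjugate symmetric, $\overline{(b,d)_{H^{2}(\mathbb{D})}}=(d,b)_{H^{2}(\mathbb{D})}$; so there is no consistency problem in the definition. Second, for \eqref{s1-1} to make sense as a function of the two vectors one needs the analytic and co-analytic parts of an element of $HH^{2}(\mathbb{D})$ to be well defined; this is the case once one fixes the usual normalization $g(0)=0$ of the co-analytic part (an analytic function that is also anti-analytic is constant), or, equivalently, once one regards $HH^{2}(\mathbb{D})$ as the external orthogonal direct sum $H^{2}(\mathbb{D})\oplus\overline{H^{2}(\mathbb{D})}$ of $H^{2}(\mathbb{D})$ with its conjugate space. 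I would state this convention explicitly before starting.

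Next I would check the algebraic axioms. Additivity and homogeneity in the first slot and the corresponding conjugate-linearity in the second are inherited coordinatewise from $H^{2}(\mathbb{D})$: writing a scalar multiple as $\alpha(a+\overline{b})=\alpha a+\overline{\overline{\alpha}\,b}$ and a sum as $(a+a')+\overline{(b+b')}$ and substituting into \eqref{s1-1}, bilinearity of $(\cdot,\cdot)_{H^{2}(\mathbb{D})}$ (with conjugates appearing in the right places) gives the claim. For conjugate symmetry one computes, directly from \eqref{s1-1} and conjugate symmetry in $H^{2}(\mathbb{D})$,
\begin{equation*}
(c+\overline{d},a+\overline{b})_{HH^{2}(\mathbb{D})}=(c,a)_{H^{2}(\mathbb{D})}+\overline{(d,b)_{H^{2}(\mathbb{D})}}=\overline{(a,c)_{H^{2}(\mathbb{D})}}+(b,d)_{H^{2}(\mathbb{D})}=\overline{(a+\overline{b},c+\overline{d})_{HH^{2}(\mathbb{D})}}.
\end{equation*}

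For positive-definiteness the point is that
\begin{equation*}
(a+\overline{b},a+\overline{b})_{HH^{2}(\mathbb{D})}=\|a\|_{H^{2}(\mathbb{D})}^{2}+\overline{\|b\|_{H^{2}(\mathbb{D})}^{2}}=\|a\|_{H^{2}(\mathbb{D})}^{2}+\|b\|_{H^{2}(\mathbb{D})}^{2}\geq 0,
\end{equation*}
because $\|b\|_{H^{2}(\mathbb{D})}^{2}$ is real, and this vanishes only when $a=0=b$, i.e. only when $a+\overline{b}=0$. Thus the induced norm is $\|a+\overline{b}\|_{HH^{2}(\mathbb{D})}^{2}=\|a\|_{H^{2}(\mathbb{D})}^{2}+\|b\|_{H^{2}(\mathbb{D})}^{2}$. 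Completeness then follows from completeness of $H^{2}(\mathbb{D})$: given a Cauchy sequence $\{a_{n}+\overline{b_{n}}\}$ in $HH^{2}(\mathbb{D})$, the estimates $\|a_{n}-a_{m}\|_{H^{2}(\mathbb{D})}^{2}\leq\|(a_{n}+\overline{b_{n}})-(a_{m}+\overline{b_{m}})\|_{HH^{2}(\mathbb{D})}^{2}$ and the analogue for $\{b_{n}\}$ show that $\{a_{n}\}$ and $\{b_{n}\}$ are Cauchy in $H^{2}(\mathbb{D})$; letting $a_{n}\to a$ and $b_{n}\to b$ there, one has $a+\overline{b}\in HH^{2}(\mathbb{D})$ and $\|(a_{n}+\overline{b_{n}})-(a+\overline{b})\|_{HH^{2}(\mathbb{D})}^{2}=\|a_{n}-a\|_{H^{2}(\mathbb{D})}^{2}+\|b_{n}-b\|_{H^{2}(\mathbb{D})}^{2}\to 0$.

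I do not expect any serious obstacle: once the convention on the decomposition is pinned down, everything transfers mechanically from $H^{2}(\mathbb{D})$, so the only genuinely delicate point is definitional—ensuring that the analytic and co-analytic parts, hence the right-hand side of \eqref{s1-1}, are unambiguously attached to each element—after which the statement is exactly that the orthogonal direct sum of a Hilbert space with its conjugate space is again a Hilbert space.
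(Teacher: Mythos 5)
Your proposal is correct and follows essentially the same route as the paper: verify positive-definiteness, linearity, and conjugate symmetry coordinatewise from the $H^{2}(\mathbb{D})$ inner product, and deduce completeness by observing that a Cauchy sequence $\{a_{n}+\overline{b_{n}}\}$ forces $\{a_{n}\}$ and $\{b_{n}\}$ to be Cauchy in $H^{2}(\mathbb{D})$. Your preliminary remark that the decomposition $f=a+\overline{b}$ must be pinned down (via a normalization or by treating $HH^{2}(\mathbb{D})$ as an external direct sum) is a point the paper passes over silently, but it does not change the argument.
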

\begin{proof}Indeed,
\begin{equation*}
(a+\overline{b},a+\overline{b})_{HH^{2}(\mathbb{D})}=\parallel a \parallel^{2}_{H^{2}(\mathbb{D})}+\parallel b \parallel^{2}_{H^{2}(\mathbb{D})}\geq 0,
\end{equation*}
and $(a+\overline{b},a+\overline{b})_{HH^{2}(\mathbb{D})}=0$ if and only if $\parallel a \parallel^{2}_{H^{2}(\mathbb{D})}=0$ and
$\parallel b \parallel^{2}_{H^{2}(\mathbb{D})}=0$, that is if $a+\overline{b}=0$. Now
\begin{align*}
(\alpha_{1}(a_{1}+\overline{b_{1}})+\alpha_{2}(a_{2}+\overline{b_{2}}),c+\overline{d})_{HH^{2}(\mathbb{D})}&=
(\alpha_{1}(a_{1}+\overline{b_{1}}),c+\overline{d})_{HH^{2}(\mathbb{D})}+(\alpha_{2}(a_{2}+\overline{b_{2}}),c+\overline{d})_{HH^{2}(\mathbb{D})} \\
&=(\alpha_{1}a_{1}+\alpha_{2}a_{2},c)_{H^{2}(\mathbb{D})}+(d,\overline{\alpha_{1}}b_{1}+\overline{\alpha_{2}}b_{2})_{H^{2}(\mathbb{D})} \\
&=\alpha_{1}\left [(a_{1},c)_{H^{2}(\mathbb{D})} +(d,b_{1})_{H^{2}(\mathbb{D})} \right ]+
\alpha_{2}\left [(a_{2},c)_{H^{2}(\mathbb{D})} +(d,b_{2})_{H^{2}(\mathbb{D})} \right ] \\
&=\alpha_{1}(a_{1}+\overline{b_{1}},c+\overline{d})_{H^{2}(\mathbb{D})}+\alpha_{2}(a_{2}+\overline{b_{2}},c+\overline{d})_{H^{2}(\mathbb{D})}.
\end{align*}

Also,
\begin{equation*}
(a+\overline{b},c+\overline{d})_{HH^{2}(\mathbb{D})}=(a,c)_{H^{2}(\mathbb{D})}+(d,b)_{H^{2}(\mathbb{D})}=\overline{(c,a)_{H^{2}(\mathbb{D})}}
+\overline{(b,d)_{H^{2}(\mathbb{D})}}=\overline{(c+\overline{d},a+\overline{b})_{HH^{2}(\mathbb{D})}}\,\,.
\end{equation*}

Finally, if $\{a_{n}+\overline{b_{n}}\}_{n\geq 0}$ is a sequence of Cauchy in $HH^{2}(\mathbb{D})$, then $\{a_{n}\}_{n\geq 0}$ and $\{b_{n}\}_{n\geq 0}$
are of Cauchy in $H^{2}(\mathbb{D})$. It shows that the sequence $\{a_{n}+\overline{b_{n}}\}_{n\geq 0}$ is convergent, thus $HH^{2}(\mathbb{D})$ is complete.
\end{proof}

\begin{lemma}The functions $\{e_{n}, f_{n}\}_{n\geq 0}$, where $e_{n}=z^{n}$ and $f_{n}=\overline{z^{n}}$ for all $n\in \mathbb{N}$ form an orthonormal basis of $HH^{2}(\mathbb{D})$.
\end{lemma}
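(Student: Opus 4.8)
The plan is to verify the two defining properties of an orthonormal basis separately: first orthonormality of the family $\{e_n,f_n\}_{n\ge0}$, then completeness of its closed linear span.

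For orthonormality I would compute directly from \eqref{s1-1}. Writing $e_n=z^n+\overline{0}$ and $f_n=0+\overline{z^n}$, and using that $\{z^n\}_{n\ge0}$ is orthonormal in $H^2(\mathbb{D})$, formula \eqref{s1-1} gives $(e_n,e_m)_{HH^2(\mathbb{D})}=(z^n,z^m)_{H^2(\mathbb{D})}=\delta_{nm}$, $(f_n,f_m)_{HH^2(\mathbb{D})}=(z^m,z^n)_{H^2(\mathbb{D})}=\delta_{nm}$, and $(e_n,f_m)_{HH^2(\mathbb{D})}=(z^n,0)_{H^2(\mathbb{D})}+(z^m,0)_{H^2(\mathbb{D})}=0$. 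The only point requiring care here is the reversal of the order of the arguments in the co-analytic slot, which is precisely what the second equality in \eqref{s1-1} records.

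For completeness, given $h=a+\overline{b}\in HH^2(\mathbb{D})$ with Taylor expansions $a=\sum_{n\ge0}\alpha_n z^n$ and $b=\sum_{n\ge0}\beta_n z^n$, I would show that $h=\sum_{n\ge0}\alpha_n e_n+\sum_{n\ge0}\overline{\beta_n} f_n$ with convergence in the $HH^2(\mathbb{D})$ norm. The $N$-th partial sum equals $\big(\sum_{n=0}^{N}\alpha_n z^n\big)+\overline{\big(\sum_{n=0}^{N}\beta_n z^n\big)}$, and since $(u+\overline{v},u+\overline{v})_{HH^2(\mathbb{D})}=\|u\|_{H^2(\mathbb{D})}^2+\|v\|_{H^2(\mathbb{D})}^2$, the squared distance from this partial sum to $h$ is $\|a-\sum_{n=0}^{N}\alpha_n z^n\|_{H^2(\mathbb{D})}^2+\|b-\sum_{n=0}^{N}\beta_n z^n\|_{H^2(\mathbb{D})}^2$, which tends to $0$ because the monomials form a basis of $H^2(\mathbb{D})$. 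Equivalently, one can argue through orthogonality: if $h=a+\overline{b}$ satisfies $(h,e_n)_{HH^2(\mathbb{D})}=(h,f_n)_{HH^2(\mathbb{D})}=0$ for every $n$, then \eqref{s1-1} forces $(a,z^n)_{H^2(\mathbb{D})}=0$ and $(z^n,b)_{H^2(\mathbb{D})}=0$ for all $n$, hence $a=b=0$ and $h=0$.

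I do not expect any genuine obstacle: via the block-diagonal structure of the inner product \eqref{s1-1} with respect to the analytic and co-analytic parts, the statement reduces to the classical fact that $\{z^n\}_{n\ge0}$ is an orthonormal basis of $H^2(\mathbb{D})$; the only bookkeeping issue is the conjugation-induced swap of arguments in the co-analytic component, already made explicit in \eqref{s1-1}.
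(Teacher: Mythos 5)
Your proposal is correct and follows essentially the same route as the paper: direct verification of orthonormality from \eqref{s1-1}, followed by completeness via the observation that $(a+\overline{b},e_n)=a_n$ and $(a+\overline{b},f_n)=\overline{b_n}$, so that a vector orthogonal to the whole family must vanish. Your additional partial-sum convergence argument is a harmless elaboration of the same fact.
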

\begin{proof}In fact
\begin{equation*}
(e_{n},f_{m})_{HH^{2}(\mathbb{D})}=(z^{n},\overline{z^{m}})_{HH^{2}(\mathbb{D})}=(z^{n},0)_{H^{2}(\mathbb{D})}+\overline{(0,z^{m})_{H^{2}(\mathbb{D})}}=0,
\end{equation*}
if $n,m\in\mathbb{N}\backslash\{0\}$. Now, $(e_{n},e_{m})_{HH^{2}(\mathbb{D})}=(z^{n},z^{m})_{H^{2}(\mathbb{D})}=\delta_{nm}$ and
$(f_{n},f_{m})_{HH^{2}(\mathbb{D})}=\overline{(z^{n},z^{m})_{H^{2}(\mathbb{D})}}=\delta_{nm}$. On the other hand, being $a+\overline{b}=\sum a_{n}z^{n}+\overline{\sum b_{n}z^{n}}$
then $(a+\overline{b}, e_{n}(z))_{HH^{2}(\mathbb{D})}=(a,z^{n})_{H^{2}(\mathbb{D})}+\overline{(b,0)_{H^{2}(\mathbb{D})}}=a_{n}$, and
$(a+\overline{b}, f_{n}(z))_{HH^{2}(\mathbb{D})}=(a,0)_{H^{2}(\mathbb{D})}+\overline{(b,z^{n})_{H^{2}(\mathbb{D})}}=\overline{b_{n}}$. Hence, if $a+\overline{b} \perp e_{n}(z)$ and
$a+\overline{b} \perp f_{n}(z)$ for all $n\in \mathbb{N}$ it follows that $a=b=0$.
\end{proof}

From now on, when there is no possibility of confusion, we will not use sub-indices to indicate to what space corresponds the internal product
or the respective norm.

Let $H(\mathbb{D})$ be the space of complex harmonic functions defined on $\mathbb{D}$. There are two main ways to make the composition of two complex harmonic functions
in order to the result be a new complex harmonic function, which maintain the linearity on the left (it will be seen in more detail below)
\begin{equation}\label{s1-2}
(a+\overline{b})\circ_{1} (c+\overline{d})=a\circ c+\overline{b\circ d},\,\,\,\,\,\,\,\,\,\,\,\,\,
(a+\overline{b})\circ_{2} (c+\overline{d})=a\circ d+\overline{b\circ c},
\end{equation}
where $\circ$ denotes the usual composition between analytic functions. From (\ref{s1-2}) follows that the more general composition product of $H(\mathbb{D})\times H(\mathbb{D})$ into $H(\mathbb{D})$
keeping the linearity on the left is the following
\begin{equation}\label{s1-3}
(a+\overline{b})\propto_{(\alpha,\beta,\gamma,\delta)} (c+\overline{d})=\alpha(a\circ c)+\beta(\overline{b\circ d})+\gamma(a\circ d)+\delta(\overline{b\circ c}),
\end{equation}
where $\alpha,\beta,\gamma,\delta\in\mathbb{C}$ are arbitrary but fixed.

We denote $AH(\mathbb{D})=\{\varphi+\overline{\pi}\,\,|\,\,\varphi,\pi\in A(\mathbb{D}):\,\,\mathbb{D}\longrightarrow \mathbb{D}\}$ where $A(\mathbb{D})$ is the space of all analytic functions and
define the composition operator $C^{(\alpha,\beta,\gamma,\delta)}_{\varphi+\overline{\pi}}:\,\,HH^{2}(\mathbb{D})\longrightarrow HH^{2}(\mathbb{D})$ defined of the following form $C^{(\alpha,\beta,\gamma,\delta)}_{\varphi+\overline{\pi}}(a+\overline{b})=(a+\overline{b})\propto_{(\alpha,\beta,\gamma,\delta)} (\varphi+\overline{\pi})=\alpha(a\circ \varphi)+\beta(\overline{b\circ \pi})+
\gamma(a\circ \pi)+\delta(\overline{b\circ \varphi})$ for all $a+\overline{b}\in HH^{2}(\mathbb{D})$.

\begin{remark}The action of $C^{(\alpha,\beta,\gamma,\delta)}_{\varphi+\overline{\pi}}$ can be written in the form
\begin{equation}\label{s1-4}
C^{(\alpha,\beta,\gamma,\delta)}_{\varphi+\overline{\pi}}(a+\overline{b})=\alpha C_{\varphi}a+\beta \overline{C_{\pi}b}+\gamma C_{\pi}a +\delta \overline{C_{\varphi}b}=
\left [\alpha C_{\varphi}a+\gamma C_{\pi}a \right ]+\left [\,\, \overline{\overline{\beta} C_{\pi}b+\overline{\delta} C_{\varphi}b}\,\,\right ]=A^{(\alpha,\beta,\gamma,\delta)}a+
\overline{B^{(\alpha,\beta,\gamma,\delta)}b},
\end{equation}
where $A^{(\alpha,\beta,\gamma,\delta)}=\alpha C_{\varphi}+\gamma C_{\pi}$ and $B^{(\alpha,\beta,\gamma,\delta)}=\overline{\beta}C_{\pi}+\overline{\delta}C_{\varphi}$.
\end{remark}

We have
\begin{lemma}The operators $A^{(\alpha,\beta,\gamma,\delta)}$ and $B^{(\alpha,\beta,\gamma,\delta)}$ defined in the previous remark are bounded linear operators on $H^{2}(\mathbb{D})$.
\end{lemma}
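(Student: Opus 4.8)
The plan is to reduce everything to the classical fact, recalled in the previous subsection, that for an analytic self-map $\phi$ of $\mathbb{D}$ the composition operator $C_{\phi}$ is a bounded linear operator on $H^{2}(\mathbb{D})$, with the explicit estimate $\|C_{\phi}\|\le \sqrt{(1+|\phi(0)|)/(1-|\phi(0)|)}$. Since $\varphi$ and $\pi$ are, by the definition of $AH(\mathbb{D})$, analytic functions mapping $\mathbb{D}$ into $\mathbb{D}$, both $C_{\varphi}$ and $C_{\pi}$ belong to $\mathcal{B}(H^{2}(\mathbb{D}))$. The operators $A^{(\alpha,\beta,\gamma,\delta)}=\alpha C_{\varphi}+\gamma C_{\pi}$ and $B^{(\alpha,\beta,\gamma,\delta)}=\overline{\beta}\,C_{\pi}+\overline{\delta}\,C_{\varphi}$ are then fixed complex-linear combinations of elements of $\mathcal{B}(H^{2}(\mathbb{D}))$.

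First I would record linearity: each $C_{\varphi}$, $C_{\pi}$ is linear because post-composition of a sum (resp. scalar multiple) of analytic functions with a fixed inner function is the sum (resp. scalar multiple) of the post-compositions; a complex-linear combination of linear maps is again linear, and the fact that the scalars appearing in $B^{(\alpha,\beta,\gamma,\delta)}$ are $\overline{\beta},\overline{\delta}$ rather than $\beta,\delta$ is irrelevant since these are still just complex numbers. Next I would record boundedness by the triangle inequality for the operator norm together with the recalled estimate:
\begin{equation*}
\|A^{(\alpha,\beta,\gamma,\delta)}\|\le |\alpha|\,\|C_{\varphi}\|+|\gamma|\,\|C_{\pi}\|
\le |\alpha|\sqrt{\tfrac{1+|\varphi(0)|}{1-|\varphi(0)|}}+|\gamma|\sqrt{\tfrac{1+|\pi(0)|}{1-|\pi(0)|}}<\infty,
\end{equation*}
and likewise $\|B^{(\alpha,\beta,\gamma,\delta)}\|\le |\beta|\,\|C_{\pi}\|+|\delta|\,\|C_{\varphi}\|<\infty$. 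This gives that both operators lie in $\mathcal{B}(H^{2}(\mathbb{D}))$, which is exactly the claim.

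There is essentially no obstacle here: the only point that needs a word is that $\mathcal{B}(H^{2}(\mathbb{D}))$ is a vector space over $\mathbb{C}$, so it is automatically closed under the finite linear combinations defining $A^{(\alpha,\beta,\gamma,\delta)}$ and $B^{(\alpha,\beta,\gamma,\delta)}$; the explicit norm bound is included only to make the statement self-contained and to prepare for later estimates on $C^{(\alpha,\beta,\gamma,\delta)}_{\varphi+\overline{\pi}}$. If one wished to avoid even invoking the norm estimate, boundedness of $C_{\varphi}$ and $C_{\pi}$ alone (via Littlewood's subordination principle, as recalled) suffices, and the combination is bounded with norm at most $|\alpha|\|C_{\varphi}\|+|\gamma|\|C_{\pi}\|$ and $|\beta|\|C_{\pi}\|+|\delta|\|C_{\varphi}\|$ respectively.
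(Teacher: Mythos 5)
Your proposal is correct and follows essentially the same route as the paper: both reduce the claim to the boundedness of the classical composition operators $C_{\varphi}$ and $C_{\pi}$ on $H^{2}(\mathbb{D})$ and then apply the triangle inequality for the operator norm to the linear combinations $\alpha C_{\varphi}+\gamma C_{\pi}$ and $\overline{\beta}C_{\pi}+\overline{\delta}C_{\varphi}$. Your version is in fact slightly more careful, since you write $|\varphi(0)|$ and $|\pi(0)|$ in the norm estimate where the paper omits the absolute values.
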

\begin{proof}The linearity of $A^{(\alpha,\beta,\gamma,\delta)}$ and $B^{(\alpha,\beta,\gamma,\delta)}$ follows from the fact that both are linear combinations of
usual bounded linear composition operators whose symbols are analytic functions mapping the unit disk into itself. For the same
reason $A^{(\alpha,\beta,\gamma,\delta)}$ and $B^{(\alpha,\beta,\gamma,\delta)}$ are bounded operators, moreover
\begin{equation}\label{s1-5}
\|A^{(\alpha,\beta,\gamma,\delta)}\|\leq |\alpha|\|C_{\varphi}\|+|\gamma|\|C_{\pi}\|\leq |\alpha|\sqrt{\frac{1+\varphi(0)}{1-\varphi(0)}}
+|\gamma|\sqrt{\frac{1+\pi(0)}{1-\pi(0)}},
\end{equation}
and
\begin{equation}\label{s1-6}
\|B^{(\alpha,\beta,\gamma,\delta)}\|\leq |\delta|\|C_{\varphi}\|+|\beta|\|C_{\pi}\|\leq |\delta|\sqrt{\frac{1+\varphi(0)}{1-\varphi(0)}}
+|\beta|\sqrt{\frac{1+\pi(0)}{1-\pi(0)}},
\end{equation}
\end{proof}

The previous result leads to our next definition

\begin{definition}\label{ds1-1}
Suppose that $A,B\in \mathcal{B}(H^{2}(\mathbb{D}))$ and define $A+\overline{B}: HH^{2}(\mathbb{D})\longrightarrow HH^{2}(\mathbb{D})$
of the following form
\begin{equation*}
(A+\overline{B})(a+\overline{b})=Aa+\overline{Bb},
\end{equation*}
for all $a+\overline{b}\in H^{2}(\mathbb{D})$.
\end{definition}

Then, the following result holds

\begin{theorem}\label{ts1-1}
For all $A,B\in \mathcal{B}(H^{2}(\mathbb{D}))$, we have $A+\overline{B}\in \mathcal{B}(HH^{2}(\mathbb{D}))$.
\end{theorem}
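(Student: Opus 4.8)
The plan is to check, directly from Definition \ref{ds1-1}, that $A+\overline{B}$ is a well-defined complex-linear map of $HH^{2}(\mathbb{D})$ into itself and that it is bounded, with the explicit bound $\|A+\overline{B}\|\le\max\{\|A\|,\|B\|\}$.

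First I would note that $A+\overline{B}$ is well defined: an element $f=a+\overline{b}\in HH^{2}(\mathbb{D})$ is determined by its analytic part $a\in H^{2}(\mathbb{D})$ and its co-analytic part $b\in H^{2}(\mathbb{D})$ --- this is what the orthonormal basis lemma above encodes, the coefficients of $a$ and of $b$ being the coordinates of $f$ in the basis $\{e_{n},f_{n}\}$ --- so $Aa+\overline{Bb}$ depends only on $f$. For (complex) linearity the one point needing care is that scalars act conjugate-linearly on the co-analytic slot: $\lambda(a+\overline{b})=\lambda a+\overline{\overline{\lambda}\,b}$ for $\lambda\in\mathbb{C}$. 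Thus $\lambda_{1}(a_{1}+\overline{b_{1}})+\lambda_{2}(a_{2}+\overline{b_{2}})=(\lambda_{1}a_{1}+\lambda_{2}a_{2})+\overline{(\overline{\lambda_{1}}\,b_{1}+\overline{\lambda_{2}}\,b_{2})}$, and applying $A+\overline{B}$ and using the linearity of $A$ and $B$ on $H^{2}(\mathbb{D})$ gives $A(\lambda_{1}a_{1}+\lambda_{2}a_{2})+\overline{B(\overline{\lambda_{1}}\,b_{1}+\overline{\lambda_{2}}\,b_{2})}=\lambda_{1}(Aa_{1}+\overline{Bb_{1}})+\lambda_{2}(Aa_{2}+\overline{Bb_{2}})$, the pair of conjugations on the barred term cancelling; this is the same bookkeeping of conjugations already carried out in the proof that $(\cdot,\cdot)_{HH^{2}(\mathbb{D})}$ is linear in its first argument.

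For boundedness I would use the inner product (\ref{s1-1}), which yields the Pythagorean identity $\|a+\overline{b}\|_{HH^{2}(\mathbb{D})}^{2}=\|a\|_{H^{2}(\mathbb{D})}^{2}+\|b\|_{H^{2}(\mathbb{D})}^{2}$, so that
\begin{align*}
\|(A+\overline{B})(a+\overline{b})\|_{HH^{2}(\mathbb{D})}^{2}&=\|Aa+\overline{Bb}\|_{HH^{2}(\mathbb{D})}^{2}=\|Aa\|_{H^{2}(\mathbb{D})}^{2}+\|Bb\|_{H^{2}(\mathbb{D})}^{2}\\
&\le\|A\|^{2}\,\|a\|_{H^{2}(\mathbb{D})}^{2}+\|B\|^{2}\,\|b\|_{H^{2}(\mathbb{D})}^{2}\le\max\{\|A\|^{2},\|B\|^{2}\}\,\|a+\overline{b}\|_{HH^{2}(\mathbb{D})}^{2}.
\end{align*}
Taking square roots gives $\|A+\overline{B}\|\le\max\{\|A\|,\|B\|\}<\infty$, hence $A+\overline{B}\in\mathcal{B}(HH^{2}(\mathbb{D}))$; evaluating on vectors with $b=0$ and then with $a=0$ shows that this is in fact an equality, although only finiteness is needed for the theorem. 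I do not anticipate a real obstacle: the entire content is keeping track of the conjugation in the co-analytic component --- the same reason Definition \ref{ds1-1} produces an honest $\mathbb{C}$-linear operator rather than a conjugate-linear one --- together with the elementary norm identity built into (\ref{s1-1}).
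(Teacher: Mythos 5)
Your proof is correct and follows essentially the same route as the paper's: verify $\mathbb{C}$-linearity by tracking the conjugation on the co-analytic slot, then use the Pythagorean identity $\|a+\overline{b}\|^{2}=\|a\|^{2}+\|b\|^{2}$ to obtain the bound $\|A+\overline{B}\|\le\max\{\|A\|,\|B\|\}$. Your added observations (well-definedness via the basis $\{e_{n},f_{n}\}$, and that the norm bound is in fact an equality) are sound refinements not present in the paper, which even has a small typo in the last displayed inequality where $\|a+\overline{b}\|$ should be squared --- your version gets this right.
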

\begin{proof}It is clear that $A+\overline{B}$ is defined in all $HH^{2}(\mathbb{D})$. First, we will prove that $A+\overline{B}$ is linear
\begin{align*}
(A+\overline{B})(\alpha_{1}(a_{1}+\overline{b_{1}})+\alpha_{2}(a_{2}+\overline{b_{2}}))&=(A+\overline{B})\left ((\alpha_{1}a_{1}+\alpha_{2}a_{2})
+\overline{(\overline{\alpha_{1}}b_{1}+\overline{\alpha_{2}}b_{2})}\right )=A(\alpha_{1}a_{1}+\alpha_{2}a_{2})+\overline{B(\overline{\alpha_{1}}b_{1}+\overline{\alpha_{2}}b_{2})} \\
&=(\alpha_{1}Aa_{1}+\alpha_{2}Aa_{2})+\overline{(\overline{\alpha_{1}}Bb_{1}+\overline{\alpha_{2}}Bb_{2})}
=\alpha_{1}(Aa_{1}+\overline{Bb_{1}})+\alpha_{2}(Aa_{2}+\overline{Bb_{2}}) \\
&=\alpha_{1}(A+\overline{B})(a_{1}+\overline{b_{1}})+\alpha_{2}(A+\overline{B})(a_{2}+\overline{b_{2}}),
\end{align*}
thus $A+\overline{B}$ is linear. It remains to be shown that $A+\overline{B}$ is bounded. In fact
\begin{align*}
\|(A+\overline{B})(a+\overline{b})\|^{2}&=\|Aa\|^{2}+\|Bb\|^{2}\leq \|A\|^{2}\|a\|^{2}+\|B\|^{2}\|b\|^{2} \\
&\leq \max\{\|A\|^{2},\|B\|^{2}\}
(\|a\|^{2}+\|b\|^{2})\leq \max\{\|A\|^{2},\|B\|^{2}\}\|(a+\overline{b})\|,
\end{align*}
thus
\begin{equation}\label{s1-7}
\|A+\overline{B}\|\leq \sqrt{\max\{\|A\|^{2},\|B\|^{2}\}}.
\end{equation}
\end{proof}

Notice that from the proof of the theorem follows that $\|A\|,\|B\|\leq \|A+\overline{B}\|$.

\begin{corollary}We have $C^{(\alpha,\beta,\gamma,\delta)}_{\varphi+\overline{\pi}}\in \mathcal{B}(HH^{2}(\mathbb{D}))$.
\end{corollary}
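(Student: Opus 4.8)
The plan is simply to assemble the ingredients already in place, so the argument is a short chaining of the preceding Remark, Lemma, and Theorem~\ref{ts1-1}. First I would observe that the Remark preceding Definition~\ref{ds1-1} has already computed, for every $a+\overline{b}\in HH^{2}(\mathbb{D})$,
$$C^{(\alpha,\beta,\gamma,\delta)}_{\varphi+\overline{\pi}}(a+\overline{b})=A^{(\alpha,\beta,\gamma,\delta)}a+\overline{B^{(\alpha,\beta,\gamma,\delta)}b},$$
with $A^{(\alpha,\beta,\gamma,\delta)}=\alpha C_{\varphi}+\gamma C_{\pi}$ and $B^{(\alpha,\beta,\gamma,\delta)}=\overline{\beta}C_{\pi}+\overline{\delta}C_{\varphi}$. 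In other words, $C^{(\alpha,\beta,\gamma,\delta)}_{\varphi+\overline{\pi}}$ coincides, as a map on $HH^{2}(\mathbb{D})$, with the operator $A^{(\alpha,\beta,\gamma,\delta)}+\overline{B^{(\alpha,\beta,\gamma,\delta)}}$ in the sense of Definition~\ref{ds1-1}; this identification is immediate once one unwinds the definitions of $\circ_{1},\circ_{2}$ and $\propto_{(\alpha,\beta,\gamma,\delta)}$.

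Next I would invoke the Lemma stating that $A^{(\alpha,\beta,\gamma,\delta)}$ and $B^{(\alpha,\beta,\gamma,\delta)}$ belong to $\mathcal{B}(H^{2}(\mathbb{D}))$. This is the only place where the standing hypothesis $\varphi,\pi\in A(\mathbb{D}):\mathbb{D}\to\mathbb{D}$ enters, since it is precisely that hypothesis which makes $C_{\varphi},C_{\pi}$ bounded composition operators on $H^{2}(\mathbb{D})$, with the usual norm bound recalled at the start of the subsection. With $A:=A^{(\alpha,\beta,\gamma,\delta)}$ and $B:=B^{(\alpha,\beta,\gamma,\delta)}$ now known to lie in $\mathcal{B}(H^{2}(\mathbb{D}))$, Theorem~\ref{ts1-1} applies verbatim and yields $A^{(\alpha,\beta,\gamma,\delta)}+\overline{B^{(\alpha,\beta,\gamma,\delta)}}\in\mathcal{B}(HH^{2}(\mathbb{D}))$, which is exactly the assertion $C^{(\alpha,\beta,\gamma,\delta)}_{\varphi+\overline{\pi}}\in\mathcal{B}(HH^{2}(\mathbb{D}))$.

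I do not expect any genuine obstacle here: the corollary is a formal consequence of the Remark together with the two results quoted. The single point worth a line of verification is the bookkeeping around Definition~\ref{ds1-1}, namely that $a+\overline{b}\mapsto Aa+\overline{Bb}$ is well defined on all of $HH^{2}(\mathbb{D})$ and matches the expression in the Remark; both are immediate. If one wishes to record a quantitative statement as well, one can combine \eqref{s1-5}, \eqref{s1-6} and \eqref{s1-7} to get
$$\|C^{(\alpha,\beta,\gamma,\delta)}_{\varphi+\overline{\pi}}\|\leq\sqrt{\max\bigl\{\|A^{(\alpha,\beta,\gamma,\delta)}\|^{2},\|B^{(\alpha,\beta,\gamma,\delta)}\|^{2}\bigr\}},$$
but this refinement is not needed for the corollary as stated.
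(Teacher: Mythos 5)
Your proof is correct and follows exactly the route the paper intends: the corollary is stated immediately after the Remark decomposing $C^{(\alpha,\beta,\gamma,\delta)}_{\varphi+\overline{\pi}}$ as $A^{(\alpha,\beta,\gamma,\delta)}+\overline{B^{(\alpha,\beta,\gamma,\delta)}}$, the Lemma placing these two operators in $\mathcal{B}(H^{2}(\mathbb{D}))$, and Theorem~\ref{ts1-1}, and the paper leaves the proof implicit precisely because it is the chaining you wrote out. Nothing is missing.
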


\begin{theorem}All operator $L\in \mathcal{B}(HH^{2}(\mathbb{D}))$ such that $L\,H^{2}(\mathbb{D})\subset H^{2}(\mathbb{D})$ and $L\,\overline{H^{2}(\mathbb{D})}\subset \overline{H^{2}(\mathbb{D})}$
is of the form $L=A_{L}+\overline{B_{L}}$ where this is understood in the sense of the definition \ref{ds1-1}, and $A_{L},B_{L}\in \mathcal{B}(H^{2}(\mathbb{D}))$.
\end{theorem}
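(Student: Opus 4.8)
The plan is to recover $A_L$ and $B_L$ by restricting $L$ to the two coordinate subspaces of $HH^{2}(\mathbb{D})$. First I would record the elementary fact, immediate from (\ref{s1-1}), that the maps $\iota_{1}\colon a\mapsto a+\overline{0}$ and $\iota_{2}\colon b\mapsto 0+\overline{b}$ are isometries of $H^{2}(\mathbb{D})$ onto the closed subspaces $H^{2}(\mathbb{D})$ and $\overline{H^{2}(\mathbb{D})}$ of $HH^{2}(\mathbb{D})$, and that these two subspaces span $HH^{2}(\mathbb{D})$ since $a+\overline{b}=(a+\overline{0})+(0+\overline{b})$ (this is also the orthogonal splitting underlying the orthonormal basis $\{e_{n},f_{n}\}$). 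The point to keep in mind is that $\iota_{1}$ is linear whereas $\iota_{2}$ is \emph{conjugate-linear}, because $0+\overline{\alpha b}=\overline{\alpha}\,(0+\overline{b})$ for $\alpha\in\mathbb{C}$; this asymmetry is the only thing that requires care.

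Next I would use the hypotheses to define $A_{L}$ and $B_{L}$. Since $L$ maps $H^{2}(\mathbb{D})$ into itself, for each $a\in H^{2}(\mathbb{D})$ there is a unique $A_{L}a\in H^{2}(\mathbb{D})$ with $L(a+\overline{0})=A_{L}a+\overline{0}$; linearity of $L$ and of $\iota_{1}$ gives that $A_{L}$ is linear, and $\|A_{L}a\|=\|L(a+\overline{0})\|\leq\|L\|\,\|a\|$ shows $A_{L}\in\mathcal{B}(H^{2}(\mathbb{D}))$ with $\|A_{L}\|\leq\|L\|$. Similarly, since $L\,\overline{H^{2}(\mathbb{D})}\subset\overline{H^{2}(\mathbb{D})}$, for each $b\in H^{2}(\mathbb{D})$ there is a unique $B_{L}b\in H^{2}(\mathbb{D})$ with $L(0+\overline{b})=0+\overline{B_{L}b}$. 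Additivity of $B_{L}$ is clear; for homogeneity --- the one step I expect to be slightly delicate --- one uses $0+\overline{\alpha b}=\overline{\alpha}\,(0+\overline{b})$ and linearity of $L$ to get $0+\overline{B_{L}(\alpha b)}=L(0+\overline{\alpha b})=\overline{\alpha}\,L(0+\overline{b})=0+\overline{\alpha B_{L}b}$, so that $B_{L}(\alpha b)=\alpha B_{L}b$ after cancelling the conjugations; hence $B_{L}$ is genuinely linear (not conjugate-linear), and $\|B_{L}b\|=\|L(0+\overline{b})\|\leq\|L\|\,\|b\|$ gives $B_{L}\in\mathcal{B}(H^{2}(\mathbb{D}))$.

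Finally I would conclude: for arbitrary $a+\overline{b}\in HH^{2}(\mathbb{D})$, linearity of $L$ yields $L(a+\overline{b})=L(a+\overline{0})+L(0+\overline{b})=(A_{L}a+\overline{0})+(0+\overline{B_{L}b})=A_{L}a+\overline{B_{L}b}=(A_{L}+\overline{B_{L}})(a+\overline{b})$ in the sense of Definition \ref{ds1-1}, so $L=A_{L}+\overline{B_{L}}$ with $A_{L},B_{L}\in\mathcal{B}(H^{2}(\mathbb{D}))$. The main obstacle is purely the bookkeeping around the conjugate-linearity of $b\mapsto\overline{b}$; beyond that, everything is a direct consequence of the linearity and boundedness of $L$ together with the isometric splitting $HH^{2}(\mathbb{D})=H^{2}(\mathbb{D})\oplus\overline{H^{2}(\mathbb{D})}$.
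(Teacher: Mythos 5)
Your proposal is correct and follows essentially the same route as the paper: restrict $L$ to the two coordinate subspaces, set $A_{L}a=La$ and $B_{L}b=\overline{L\overline{b}}$, and verify linearity and the bound $\|A_{L}\|,\|B_{L}\|\leq\|L\|$. Your explicit check that $B_{L}$ is genuinely linear (via $0+\overline{\alpha b}=\overline{\alpha}\,(0+\overline{b})$) is the one point the paper passes over silently, and it is handled correctly.
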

\begin{proof}From the linearity of $L$ for every $a+\overline{b}$, we have that $L(a+\overline{b})=La+L\overline{b}$, moreover $La \in H^{2}(\mathbb{D})$ and $L\overline{b}\in \overline{H^{2}(\mathbb{D})}$.
Define now $A_{L}a=La$ and $B_{L}b=\overline{L\overline{b}}$ then $A_{L}$ and $B_{L}$ are linear. Also, it is evident that $D(A_{L})=D(B_{L})=H^{2}(\mathbb{D})$ and
$L(a+\overline{b})=A_{L}a+\overline{B_{L}b}=(A_{L}+\overline{B_{L}})(a+\overline{b})$ in the sense of the definition \ref{ds1-1}. Next, we will see that both operators $A_{L}$ and $B_{L}$ are bounded. Indeed,
\begin{equation*}
\|Aa\|^{2}_{H^{2}(\mathbb{D})}+\|Bb\|^{2}_{H^{2}(\mathbb{D})}=\|Aa+\overline{Bb}\|^{2}_{HH^{2}(\mathbb{D})}=\|(A+\overline{B})
(a+\overline{b})\|^{2}_{HH^{2}(\mathbb{D})}
=\|L(a+\overline{b})\|^{2}_{HH^{2}(\mathbb{D})}\leq \|L\|(\|a\|^{2}_{H^{2}(\mathbb{D})}+\|b\|^{2}_{H^{2}(\mathbb{D})}),
\end{equation*}
it shows that $\|A\|_{\mathcal{B}(H^{2}(\mathbb{D}))},\|B\|_{\mathcal{B}(H^{2}(\mathbb{D}))}\leq \|L\|_{\mathcal{B}(HH^{2}(\mathbb{D}))}$.
\end{proof}

The following lemma will be needed to establish some results of the next subsection.

\begin{lemma}\label{ls1-1}
Suppose that $L=A+\overline{B}\in \mathcal{B}(HH^{2}(\mathbb{D}))$. Then, $L^{\ast}=A^{\ast}+\overline{B^{\ast}}$ and moreover $L^{\ast}\in \mathcal{B}(HH^{2}(\mathbb{D}))$.
\end{lemma}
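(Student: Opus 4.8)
The plan is to verify the adjoint formula $L^{\ast}=A^{\ast}+\overline{B^{\ast}}$ directly from the defining relation $\langle L x, y\rangle = \langle x, L^{\ast}y\rangle$, using the explicit form of the inner product on $HH^{2}(\mathbb{D})$ given in \eqref{s1-1}. First I would note that $A^{\ast}$ and $B^{\ast}$ exist and lie in $\mathcal{B}(H^{2}(\mathbb{D}))$ because $A,B$ are bounded operators on the Hilbert space $H^{2}(\mathbb{D})$; hence by Theorem \ref{ts1-1} the operator $A^{\ast}+\overline{B^{\ast}}$ is a well-defined element of $\mathcal{B}(HH^{2}(\mathbb{D}))$, so the only thing to prove is that it acts as the adjoint of $L$.

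The key computation is the following. Take arbitrary $a+\overline{b}$ and $c+\overline{d}$ in $HH^{2}(\mathbb{D})$. Using Definition \ref{ds1-1} and \eqref{s1-1},
\begin{align*}
(L(a+\overline{b}),\,c+\overline{d})_{HH^{2}(\mathbb{D})} &= (Aa+\overline{Bb},\,c+\overline{d})_{HH^{2}(\mathbb{D})}
= (Aa,c)_{H^{2}(\mathbb{D})}+(d,Bb)_{H^{2}(\mathbb{D})} \\
&= (a,A^{\ast}c)_{H^{2}(\mathbb{D})}+(B^{\ast}d,b)_{H^{2}(\mathbb{D})}
= (a,A^{\ast}c)_{H^{2}(\mathbb{D})}+(b,B^{\ast}d)_{H^{2}(\mathbb{D})}^{-} \\
&= (a+\overline{b},\,A^{\ast}c+\overline{B^{\ast}d})_{HH^{2}(\mathbb{D})}
= (a+\overline{b},\,(A^{\ast}+\overline{B^{\ast}})(c+\overline{d}))_{HH^{2}(\mathbb{D})},
\end{align*}
where in the penultimate line I used that $(B^{\ast}d,b)_{H^{2}(\mathbb{D})}=\overline{(b,B^{\ast}d)_{H^{2}(\mathbb{D})}}$ (the notation $(\cdot,\cdot)^{-}$ standing for complex conjugate), which is exactly the second summand appearing in the formula \eqref{s1-1} for the inner product of $a+\overline{b}$ with $A^{\ast}c+\overline{B^{\ast}d}$. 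Since this identity holds for all $a+\overline{b},c+\overline{d}$, the uniqueness of the Hilbert-space adjoint gives $L^{\ast}=A^{\ast}+\overline{B^{\ast}}$.

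The membership $L^{\ast}\in\mathcal{B}(HH^{2}(\mathbb{D}))$ is then immediate, either from the general theory (the adjoint of a bounded operator on a Hilbert space is bounded) or, more in the spirit of the section, by invoking Theorem \ref{ts1-1} with $A^{\ast},B^{\ast}$ in place of $A,B$, which also yields the bound $\|L^{\ast}\|\le\sqrt{\max\{\|A^{\ast}\|^{2},\|B^{\ast}\|^{2}\}}=\sqrt{\max\{\|A\|^{2},\|B\|^{2}\}}$. I do not anticipate a genuine obstacle here; the only point requiring care is bookkeeping the conjugations in \eqref{s1-1} correctly, in particular making sure that the ``co-analytic slot'' of the inner product contributes $(d,Bb)$ rather than $(Bb,d)$, so that when the adjoint is moved across one lands on $(B^{\ast}d,b)=\overline{(b,B^{\ast}d)}$, matching the co-analytic slot of the right-hand inner product. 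Once the conjugations are tracked consistently, the proof is a one-line application of the definitions.
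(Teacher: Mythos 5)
Your proof is correct and follows essentially the same route as the paper: a direct verification of the adjoint identity against the inner product \eqref{s1-1}, followed by an appeal to Theorem \ref{ts1-1} for boundedness. The only difference is cosmetic bookkeeping (you carry the co-analytic slot as $(d,Bb)$ where the paper carries it as $\overline{(Bg,m)}$, which are equal by \eqref{s1-1}).
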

\begin{proof}For any $f+\overline{g},l+\overline{m}\in HH^{2}(\mathbb{D})$ one has
\begin{align*}
\left ((A+\overline{B})(f+\overline{g}) ,l+\overline{m} \right )&=\left (Af+\overline{Bg},l+\overline{m} \right )=(Af,l)+\overline{(Bg,m)}=(f,A^{\ast}l)+\overline{(g,B^{\ast}m)}   \\
&=(f,A^{\ast}l)+\overline{(g,B^{\ast}m)}=(f+\overline{g}, A^{\ast}l+B^{\ast}m)=(f+\overline{g},\left (A^{\ast}+\overline{B^{\ast}}\,\right )(l+\overline{m})),
\end{align*}
now, that $A^{\ast}+\overline{B^{\ast}}$ is bounded follows from theorem \ref{ts1-1}.
\end{proof}

Consider now $\mathcal{E}=\mathcal{B}(H^{2}(\mathbb{D}))+\overline{\mathcal{B}(H^{2}(\mathbb{D}))}\subset \mathcal{B}(HH^{2}(\mathbb{D}))$. The space $\mathcal{E}$ may be regarded a vector space
if one defines $(A+\overline{B})+(C+\overline{D})=(A+C)+\overline{(B+D)}$ and $\lambda (A+\overline{B})=\lambda A+\overline{\lambda B}$ where $\lambda\in \mathbb{C}$. Observe
that if $A+\overline{B}\in \mathcal{E}$ then $(A+\overline{B})^{\ast}=A^{\ast}+\overline{B^{\ast}}\in \mathcal{E}$. The composition of operators of this type leads to the equality $(A+\overline{B})(C+\overline{D})
=AC+\overline{BD}\in \mathcal{E}$. Next, we intend to show that $\mathcal{E}$ is a closed subspace of $\mathcal{B}(HH^{2}(\mathbb{D}))$, suppose that $A_{n}+\overline{B}_{n}\longrightarrow L$ in
$\mathcal{B}(HH^{2}(\mathbb{D}))$, this means that $\{A_{n}+\overline{B}_{n}\}$ is a Cauchy sequence then so are $\{A_{n}\}$ and $\{B_{n}\}$. Taking into account that $\mathcal{B}(H^{2}(\mathbb{D}))$
is complete it follows that there are $A, B\in \mathcal{B}(H^{2}(\mathbb{D}))$ such that $A_{n}\longrightarrow A$ and $B_{n}\longrightarrow B$ which implies that $A_{n}+\overline{B}_{n}\longrightarrow
A+\overline{B}$, thus $L=A+\overline{B}$.

\subsection{Reproducing kernel and some composition operators}

In this part of our work, we will characterize the composition operators introduced above, in terms of complex harmonic reproduction kernels.

\begin{definition}We shall say that $k(\lambda,z)+\overline{i(\lambda,z)}$ is a reproducing kernel in $HH^{2}(\mathbb{D})$ if
$$(a(z)+\overline{b(z)},k(\lambda,z)+\overline{i(\lambda,z)})=a(\lambda)+\overline{b(\lambda)},$$
for all $a+\overline{b}\in HH^{2}(\mathbb{D})$ and for all $\lambda\in \mathbb{D}$.
It is clear that in this case both $k(\lambda,z)$ and $i(\lambda,z)$ must be reproducing kernels in $H^{2}(\mathbb{D})$.
\end{definition}

Below, by a simple composition operator, we mean an operator of the form $C^{(1,1,0,0)}_{\varphi+\overline{\pi}}=C_{\varphi+\overline{\pi}}=C_{\varphi}+\overline{C_{\pi}}$.
We recall that $\varphi, \pi : \mathbb{D}\longrightarrow \mathbb{D}$ are analytic functions.

\begin{proposition}For all reproducing kernel $k(\lambda,z)+\overline{i(\lambda,z)}$ we have
\begin{equation}\label{s1-8}
(C_{\varphi+\overline{\pi}})^{\ast}(k(\lambda,z)+\overline{i(\lambda,z)})=k(\varphi(\lambda),z)+\overline{i(\pi(\lambda),z)}.
\end{equation}
\end{proposition}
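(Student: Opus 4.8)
The plan is to verify the adjoint identity directly against an arbitrary element of $HH^{2}(\mathbb{D})$, reducing everything to the classical fact that for an analytic self-map $\phi$ of $\mathbb{D}$, the adjoint of the ordinary composition operator $C_{\phi}$ on $H^{2}(\mathbb{D})$ sends the Szeg\H{o} kernel $k(\lambda,\cdot)$ to $k(\phi(\lambda),\cdot)$. First I would write $C_{\varphi+\overline{\pi}}=C_{\varphi}+\overline{C_{\pi}}$ in the sense of Definition \ref{ds1-1}, so that by Lemma \ref{ls1-1} its adjoint is $(C_{\varphi+\overline{\pi}})^{\ast}=(C_{\varphi})^{\ast}+\overline{(C_{\pi})^{\ast}}$, again in the sense of Definition \ref{ds1-1}. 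Hence for the harmonic reproducing kernel $k(\lambda,z)+\overline{i(\lambda,z)}$ one has, by the very definition of how $A+\overline{B}$ acts,
\begin{equation*}
(C_{\varphi+\overline{\pi}})^{\ast}\bigl(k(\lambda,z)+\overline{i(\lambda,z)}\bigr)=(C_{\varphi})^{\ast}k(\lambda,z)+\overline{(C_{\pi})^{\ast}i(\lambda,z)}.
\end{equation*}
Since, as the definition of the harmonic reproducing kernel already records, $k(\lambda,z)$ and $i(\lambda,z)$ are each reproducing kernels for $H^{2}(\mathbb{D})$, the classical identity $(C_{\phi})^{\ast}k_{\lambda}=k_{\phi(\lambda)}$ applies to each summand and gives $(C_{\varphi})^{\ast}k(\lambda,z)=k(\varphi(\lambda),z)$ and $(C_{\pi})^{\ast}i(\lambda,z)=i(\pi(\lambda),z)$, which is exactly \eqref{s1-8}.

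Alternatively, and perhaps more in the spirit of being self-contained, I would prove the identity by pairing both sides against an arbitrary $a+\overline{b}\in HH^{2}(\mathbb{D})$ and using the reproducing property twice. On one hand, using the definition of the adjoint and then the harmonic reproducing property,
\begin{align*}
\bigl(a+\overline{b},(C_{\varphi+\overline{\pi}})^{\ast}(k(\lambda,z)+\overline{i(\lambda,z)})\bigr)&=\bigl(C_{\varphi+\overline{\pi}}(a+\overline{b}),k(\lambda,z)+\overline{i(\lambda,z)}\bigr)\\
&=\bigl((a\circ\varphi)+\overline{(b\circ\pi)},k(\lambda,z)+\overline{i(\lambda,z)}\bigr)\\
&=a(\varphi(\lambda))+\overline{b(\pi(\lambda))}.
\end{align*}
On the other hand, applying the harmonic reproducing property with the kernel evaluated at $\varphi(\lambda)$ in the first slot and $\pi(\lambda)$ in the second,
\begin{equation*}
\bigl(a+\overline{b},k(\varphi(\lambda),z)+\overline{i(\pi(\lambda),z)}\bigr)=a(\varphi(\lambda))+\overline{b(\pi(\lambda))}.
\end{equation*}
Since the two inner products agree for every $a+\overline{b}\in HH^{2}(\mathbb{D})$, non-degeneracy of the inner product forces $(C_{\varphi+\overline{\pi}})^{\ast}(k(\lambda,z)+\overline{i(\lambda,z)})=k(\varphi(\lambda),z)+\overline{i(\pi(\lambda),z)}$, as claimed.

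The only genuine subtlety — the step I would be most careful about — is the bookkeeping of conjugations in the harmonic inner product \eqref{s1-1}, namely that $(A+\overline{B})^{\ast}$ in Lemma \ref{ls1-1} acts as $a+\overline{b}\mapsto A^{\ast}a+\overline{B^{\ast}b}$ with the $B^{\ast}$ landing on the co-analytic part exactly as in Definition \ref{ds1-1}; one must check that the sesquilinearity convention used for $(\cdot,\cdot)_{HH^{2}(\mathbb{D})}$ does not introduce an extra complex conjugate on $i(\lambda,z)$ or on $\pi(\lambda)$. Because $\pi$ is analytic (not conjugated) as the co-analytic symbol, the evaluation $b(\pi(\lambda))$ is the correct one and no spurious conjugate appears; the second computation above is really just a transcription of this. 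Everything else is a direct application of the harmonic reproducing property and the linearity established in Theorem \ref{ts1-1}, so no estimates or limiting arguments are needed.
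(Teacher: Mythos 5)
Your second argument --- pairing both sides against an arbitrary $a+\overline{b}\in HH^{2}(\mathbb{D})$, unwinding the definition of the adjoint and the harmonic reproducing property, and concluding by non-degeneracy of the inner product --- is exactly the paper's own proof, including the careful tracking of the conjugation in \eqref{s1-1}. Your first, componentwise variant via Lemma \ref{ls1-1} and the classical identity $C_{\phi}^{\ast}k_{\lambda}=k_{\phi(\lambda)}$ is also correct (it is essentially the route the paper uses in the theorem immediately following), so the proposal is sound and matches the paper's approach.
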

\begin{proof}For every $a+\overline{b}\in HH^{2}(\mathbb{D})$ we obtain
\begin{align*}
\left (a+\overline{b},(C_{\varphi+\overline{\pi}})^{\ast}\left (k(\lambda,z)+\overline{i(\lambda,z)}\right )\right )_{HH^{2}(\mathbb{D})}&=
\left (C_{\varphi+\overline{\pi}}(a+\overline{b}), k(\lambda,z)+\overline{i(\lambda,z)} \right )_{HH^{2}(\mathbb{D})} \\
&=\left (C_{\varphi}a+\overline{C_{\pi} b}, k(\lambda,z)+\overline{i(\lambda,z)} \right )_{HH^{2}(\mathbb{D})} \\
&=(C_{\varphi}a, k(\lambda,z))_{H^{2}(\mathbb{D})}+\overline{(C_{\pi}b, i(\lambda,z))_{H^{2}(\mathbb{D})}} \\
&=((a\circ \varphi)(z), k(\lambda,z))_{H^{2}(\mathbb{D})}+\overline{((b\circ \pi)(z), i(\lambda,z))_{H^{2}(\mathbb{D})}} \\
&=a(\varphi(\lambda))+\overline{b(\pi(\lambda))}=a\circ\varphi (\lambda)+\overline{b\circ\pi (\lambda)}.
\end{align*}

But also,
\begin{equation*}
\left (a+\overline{b},k(\varphi(\lambda),z)+\overline{i(\pi(\lambda),z)} \right )_{H^{2}(\mathbb{D})}=(a, k(\varphi(\lambda),z))_{H^{2}(\mathbb{D})}+
\overline{(b, i(\varphi(\lambda),z))_{H^{2}(\mathbb{D})}}=a(\varphi(\lambda))+\overline{b(\pi(\lambda))}=a\circ\varphi (\lambda)+\overline{b\circ\pi (\lambda)},
\end{equation*}
and since $a+\overline{b}$ is arbitrary, (\ref{s1-8}) holds.
\end{proof}

\begin{theorem}The operator $L=A+\overline{B}$ where $A,B\in \mathcal{B}(H^{2}(\mathbb{D}))$ is a simple composition operator in $HH^{2}(\mathbb{D})$
if and only if $A^{\ast}$ and $B^{\ast}$ map the space of reproducing kernels of $H^{2}(\mathbb{D})$ in itself.
\end{theorem}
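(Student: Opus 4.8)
The plan is to peel off the two coordinates of $L=A+\overline{B}$ and reduce everything to the classical Hardy-space fact: a bounded operator on $H^{2}(\mathbb{D})$ is a composition operator $C_{\varphi}$ induced by an analytic self-map $\varphi$ of $\mathbb{D}$ if and only if its adjoint maps each Szeg\H{o} reproducing kernel $k(\lambda,\cdot)$ of $H^{2}(\mathbb{D})$ to another such kernel. Here ``the space of reproducing kernels'' is read as the family $\{k(\lambda,\cdot):\lambda\in\mathbb{D}\}$, which is exactly what the Proposition preceding this theorem computes for simple composition operators.

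For the forward implication, suppose $L=A+\overline{B}$ is a simple composition operator, i.e.\ $L=C_{\varphi+\overline{\pi}}=C_{\varphi}+\overline{C_{\pi}}$ with $\varphi,\pi:\mathbb{D}\to\mathbb{D}$ analytic. Since the decomposition of an element of $\mathcal{E}$ in the sense of Definition~\ref{ds1-1} is unique (applying $L$ to $a\in H^{2}(\mathbb{D})$ gives $Aa=La$, and likewise to $\overline{b}$ gives $\overline{Bb}=L\overline{b}$), we get $A=C_{\varphi}$ and $B=C_{\pi}$. Then the standard identity $\langle f,C_{\varphi}^{\ast}k(\lambda,\cdot)\rangle=\langle C_{\varphi}f,k(\lambda,\cdot)\rangle=(C_{\varphi}f)(\lambda)=f(\varphi(\lambda))=\langle f,k(\varphi(\lambda),\cdot)\rangle$, valid for all $f\in H^{2}(\mathbb{D})$, yields $A^{\ast}k(\lambda,\cdot)=k(\varphi(\lambda),\cdot)$, a reproducing kernel of $H^{2}(\mathbb{D})$; the same argument applied to $B^{\ast}=C_{\pi}^{\ast}$ gives $B^{\ast}k(\lambda,\cdot)=k(\pi(\lambda),\cdot)$.

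For the converse, assume $A^{\ast}$ and $B^{\ast}$ send each reproducing kernel of $H^{2}(\mathbb{D})$ to a reproducing kernel of $H^{2}(\mathbb{D})$. Because the kernel reproducing point evaluation at $\mu$ determines $\mu$, there is a well-defined map $\varphi:\mathbb{D}\to\mathbb{D}$ with $A^{\ast}k(\lambda,\cdot)=k(\varphi(\lambda),\cdot)$ for every $\lambda$; then for all $f\in H^{2}(\mathbb{D})$, $(Af)(\lambda)=\langle Af,k(\lambda,\cdot)\rangle=\langle f,A^{\ast}k(\lambda,\cdot)\rangle=\langle f,k(\varphi(\lambda),\cdot)\rangle=f(\varphi(\lambda))$, so $A$ acts by $f\mapsto f\circ\varphi$. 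Taking $f(z)=z$ shows $\varphi=A(z)\in H^{2}(\mathbb{D})$, hence $\varphi$ is analytic, and $\varphi(\mathbb{D})\subset\mathbb{D}$ because each $k(\varphi(\lambda),\cdot)$ is a reproducing kernel of $H^{2}(\mathbb{D})$ whose index set is $\mathbb{D}$; thus $\varphi$ is an analytic self-map and $A=C_{\varphi}$. In the same way $B=C_{\pi}$ for an analytic self-map $\pi$ of $\mathbb{D}$, and therefore $L=A+\overline{B}=C_{\varphi}+\overline{C_{\pi}}=C_{\varphi+\overline{\pi}}$ is a simple composition operator.

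The adjoint computations for $C_{\varphi}$ and the uniqueness of the $A+\overline{B}$ splitting are routine. The point requiring care — the main obstacle — is the converse extraction of a genuine analytic self-map from the bare hypothesis on $A^{\ast}$: one must check single-valuedness of $\lambda\mapsto\varphi(\lambda)$ (uniqueness of the kernel index), analyticity of $\varphi$ (via $(Af)(\lambda)=f(\varphi(\lambda))$ with $f=\mathrm{id}$, so that $\varphi=A(\mathrm{id})\in H^{2}(\mathbb{D})$), and that $\varphi$ takes values in $\mathbb{D}$ (immediate from the kernel hypothesis). Once $\varphi$ and $\pi$ are produced, reassembling $L=C_{\varphi+\overline{\pi}}$ is trivial.
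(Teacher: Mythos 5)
Your proof is correct and follows essentially the same route as the paper: the forward direction reduces to the adjoint identity $C_{\varphi}^{\ast}k(\lambda,\cdot)=k(\varphi(\lambda),\cdot)$ coordinatewise, and the converse defines $\varphi,\pi$ from the images of the kernels, derives $(Af)(\lambda)=f(\varphi(\lambda))$, and takes $f=\mathrm{id}$ to get analyticity of $\varphi$, exactly as in the paper. Your added remarks on single-valuedness of $\lambda\mapsto\varphi(\lambda)$ and on the uniqueness of the $A+\overline{B}$ splitting only make explicit points the paper leaves implicit.
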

\begin{proof}We already have seen that (see (\ref{s1-8})) $C^{\ast}_{\varphi+\overline{\pi}}(k(\lambda,z)+\overline{i(\lambda,z)})=k(\varphi(\lambda),z)+\overline{i(\pi(\lambda),z)}$. Moreover, from the
lemma \ref{ls1-1}, we conclude that $C^{\ast}_{\varphi+\overline{\pi}}=C^{\ast}_{\varphi}+\overline{C^{\ast}_{\pi}}$ hence
$C^{\ast}_{\varphi}k(\lambda,z)=k(\varphi(\lambda),z)$ and $C^{\ast}_{\pi}i(\lambda,z)=i(\pi(\lambda),z)$.

Reciprocally, suppose $(A^{\ast}+\overline{B^{\ast}})(k(\lambda,z)+\overline{i(\lambda,z)})=A^{\ast}k(\lambda,z)+\overline{B^{\ast}i(\lambda,z)}
=k(\lambda^{'},z)+\overline{i(\lambda^{''},z)}$, where $k(\lambda^{'},z)$ and $i(\lambda^{''},z)$ are reproducing kernels in $H^{2}(\mathbb{D})$.
It is clear that $A^{\ast}k(\lambda,z)=k(\lambda^{'},z)$ and $B^{\ast}i(\lambda,z)=i(\lambda^{''},z)$.
Define $\varphi, \pi :\mathbb{D}\longrightarrow \mathbb{D}$ in the following form
$\lambda^{'}=\varphi(\lambda)$ and $\lambda^{''}=\pi(\lambda)$, Observe that it possible because $\lambda^{'}, \lambda^{''}\in \mathbb{D}$.
Then, $\forall f \in H^{2}(\mathbb{D})$
\begin{equation*}
(Af)(\lambda)=(Af,k(\lambda,z))=(f,A^{\ast}k(\lambda,z))=(f,k(\lambda^{'},z))=(f,k(\varphi(\lambda),z))=f(\varphi(\lambda)),
\end{equation*}
and $\forall g \in H^{2}(\mathbb{D})$
\begin{equation*}
(Bg)(\lambda)=(Bg,i(\lambda,z))=(g,B^{\ast}i(\lambda,z))=(g,i(\lambda^{''},z))=(g,k(\pi(\lambda),z))=g(\pi(\lambda)).
\end{equation*}

Taking into account that $A, B : H^{2}(\mathbb{D})\longrightarrow H^{2}(\mathbb{D})$ if $f=g=z$ then $(Az)(\lambda)=
\varphi(\lambda)\in H^{2}(\mathbb{D})$ and $(Bz)(\lambda)=\pi(\lambda)\in H^{2}(\mathbb{D})$. Since $f,g \in H^{2}(\mathbb{D})$
are arbitrary, we obtain $A=C_{\varphi}$ and $B=C_{\pi}$. It follows that $L=C_{\varphi}+\overline{C_{\pi}}$.
\end{proof}

\begin{theorem}$L=A+\overline{B}\in\mathcal{E}$ is a simple composition operator in
$HH^{2}(\mathbb{D})$ if and only if $Le_{n}=Ae_{n}=(Ae_{1})^{n}=(L e_{1})^{n}$ and $Lf_{n}=\overline{Be_{n}}=\overline{(Be_{1})^{n}}
=\left (\,\overline{Be_{1}}\, \right )^{n}=(L f_{1})^{n}$ for all $n\in \mathbb{N}$, that is, if and only if $Ae_{n}=(Ae_{1})^{n}$
and $Be_{n}=(Be_{1})^{n}$ for any $n\in \mathbb{N}$.
\end{theorem}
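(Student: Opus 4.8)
The plan is to prove the two implications separately; the forward one is essentially bookkeeping, and the reverse one carries the real content.

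\emph{Forward direction.} Suppose $L$ is a simple composition operator, say $L=C_{\varphi+\overline{\pi}}=C_{\varphi}+\overline{C_{\pi}}$ with $\varphi,\pi\colon\mathbb{D}\to\mathbb{D}$ analytic. Since the decomposition $L=A+\overline{B}$ is unique (evaluate $L$ separately on $H^{2}(\mathbb{D})$ and on $\overline{H^{2}(\mathbb{D})}$), this gives $A=C_{\varphi}$ and $B=C_{\pi}$. Because $e_{n}=z^{n}+\overline{0}$ and $f_{n}=0+\overline{z^{n}}$, for every $L=A+\overline{B}\in\mathcal{E}$ one has $Le_{n}=Ae_{n}$ and $Lf_{n}=\overline{Be_{n}}$; in the present case $Ae_{n}=C_{\varphi}(z^{n})=\varphi^{n}=(C_{\varphi}z)^{n}=(Ae_{1})^{n}=(Le_{1})^{n}$ and, since complex conjugation is multiplicative, $\overline{Be_{n}}=\overline{C_{\pi}(z^{n})}=\overline{\pi^{n}}=\overline{(Be_{1})^{n}}=\bigl(\overline{Be_{1}}\bigr)^{n}=(Lf_{1})^{n}$. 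This yields all the displayed equalities, in particular $Ae_{n}=(Ae_{1})^{n}$ and $Be_{n}=(Be_{1})^{n}$ for all $n$.

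\emph{Reverse direction.} Assume $Ae_{n}=(Ae_{1})^{n}$ and $Be_{n}=(Be_{1})^{n}$ for every $n\in\mathbb{N}$; note the case $n=0$ forces $A(1)=1=B(1)$, which will be needed to pin $A$ and $B$ down on the constants. Put $\varphi:=Ae_{1}=A(z)$ and $\pi:=Be_{1}=B(z)$; these lie in $H^{2}(\mathbb{D})$, hence are analytic on $\mathbb{D}$. The crucial step is to show $\varphi(\mathbb{D})\subseteq\mathbb{D}$ and $\pi(\mathbb{D})\subseteq\mathbb{D}$, so that $C_{\varphi},C_{\pi}$ are well-defined bounded operators on $H^{2}(\mathbb{D})$. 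From $\varphi^{n}=Ae_{n}$ and $\|e_{n}\|=1$ we get $\|\varphi^{n}\|\leq\|A\|$ for all $n$. Evaluating at $\lambda\in\mathbb{D}$ through the reproducing kernel $k(\lambda,\cdot)$ of $H^{2}(\mathbb{D})$, whose norm is $(1-|\lambda|^{2})^{-1/2}$, gives $|\varphi(\lambda)|^{n}=|\varphi^{n}(\lambda)|\leq\|A\|\,(1-|\lambda|^{2})^{-1/2}$; taking $n$-th roots and letting $n\to\infty$ yields $|\varphi(\lambda)|\leq 1$ throughout $\mathbb{D}$. By the open mapping theorem, either $\varphi$ maps $\mathbb{D}$ into $\mathbb{D}$ or $\varphi$ is a constant of modulus one; the latter is impossible because an operator sending $z^{n}$ to the constant function $c^{n}$ with $|c|=1$ cannot be bounded on $H^{2}(\mathbb{D})$ (test it on $f(z)=\sum_{k\geq 0}\overline{c}^{\,k}(k+1)^{-1}z^{k}\in H^{2}(\mathbb{D})$, whose image would be the divergent series $\sum_{k\geq 0}(k+1)^{-1}$). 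Hence $\varphi,\pi\colon\mathbb{D}\to\mathbb{D}$.

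To finish, observe that $A$ and $C_{\varphi}$ are bounded operators on $H^{2}(\mathbb{D})$ agreeing on every $e_{n}$ (since $Ae_{n}=\varphi^{n}=z^{n}\circ\varphi=C_{\varphi}e_{n}$, including $n=0$), hence on the dense subspace of polynomials, hence everywhere; so $A=C_{\varphi}$, and likewise $B=C_{\pi}$. Therefore $L=A+\overline{B}=C_{\varphi}+\overline{C_{\pi}}=C_{\varphi+\overline{\pi}}$ is a simple composition operator, and the equivalence with the two longer chains of equalities is then immediate from $Le_{n}=Ae_{n}$, $Le_{1}=Ae_{1}$, $Lf_{n}=\overline{Be_{n}}$, $Lf_{1}=\overline{Be_{1}}$, valid for any $L=A+\overline{B}\in\mathcal{E}$. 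I expect the only genuinely non-formal point to be upgrading the pointwise estimate $|\varphi|\leq 1$ on $\mathbb{D}$ to $\varphi(\mathbb{D})\subseteq\mathbb{D}$ — that is, excluding boundary-valued constant symbols, for which no bounded composition operator on $H^{2}(\mathbb{D})$ exists — whereas the concluding density argument is routine.
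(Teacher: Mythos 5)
Your proposal is correct and follows essentially the same route as the paper: the forward direction is the same direct computation on the basis $\{e_n,f_n\}$, and the reverse direction defines $\varphi=Ae_1$, $\pi=Be_1$, uses the uniform bound $\|\varphi^n\|\le\|A\|$ to force $\varphi(\mathbb{D})\subseteq\mathbb{D}$, and identifies $A=C_\varphi$, $B=C_\pi$ by density. The only difference is that you spell out (via the reproducing-kernel estimate and the exclusion of unimodular constant symbols) the step the paper delegates to a citation of Shapiro, p.~169.
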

\begin{proof}Suppose first that $L=C_{\varphi}+\overline{C_{\pi}}$ then $Le_{n}=(C_{\varphi}+\overline{C_{\pi}})e_{n}=C_{\varphi}z^{n}
=\varphi^{n}(z)=(C_{\varphi}z)^{n}=(Le_{1})^{n}$ for all $n\in \mathbb{N}$. In the same way, $\forall n\in \mathbb{N}$ we have $Lf_{n}=(C_{\varphi}+\overline{C_{\pi}})f_{n}=\overline{C_{\pi}z^{n}}=\overline{\pi^{n}(z)}=\left (\overline{\pi(z)}\right )^{n}
=\left (\,\overline{C_{\pi}z}\,\right )^{n}=\left (Lf_{1} \right )^{n}$.

Conversely, assume that we have $L=A+\overline{B}$ where $A,B\in \mathcal{B}(H^{2}(\mathbb{D}))$ such that $Az^{n}=(Az)^{n}$ and
$Bz^{n}=(Bz)^{n}$. We must show that $L$ is a simple composition operator. Define $\varphi(z)=Az$ and $\pi(z)=Bz$, then
$\varphi,\pi\in H^{2}(\mathbb{D})$. Now, $Le_{n}=Lz^{n}=Az^{n}=(Az)^{n}=\varphi^{n}(z)$ and $Lf_{n}=L\overline{z^{n}}=\overline{Bz^{n}}=\overline{(Bz)^{n}}=\overline{\pi^{n}(z)}$. It implies that if
$l+\overline{m}=\Sigma\, l_{n}e_{n}+\Sigma\, \overline{m_{n}e_{n}}$ hence $L(l+\overline{m})=(A+\overline{B})(l+\overline{m})=\Sigma\, l_{n}Ae_{n}+
\Sigma\, \overline{m_{n}Be_{n}}=\Sigma\, l_{n}\varphi^{n}(z)+
\Sigma\, \overline{m_{n}\pi^{n}(z)}=(C_{\varphi}+\overline{C_{\pi}})(l+\overline{m})$ and since $l+\overline{m}\in HH^{2}(\mathbb{D})$ is arbitrary,
in order to see that $L=A+\overline{B}$ is a simple composition operator is sufficient to prove that both functions $\varphi, \pi$ map $\mathbb{D}$ into
$\mathbb{D}$. For this purpose it is sufficient to point out that $(\varphi(z))^{n}=Ae_{n}$ and $(\pi(z))^{n}=Be_{n}$, hence
$\|(\varphi(z))^{n}\|\leq \|A\|$ and $\|(\pi(z))^{n}\|\leq \|B\|$ (see \cite{shapiro} page 169).
\end{proof}

For $l+\overline{m},p+\overline{q}\in HH^{2}(\mathbb{D})$, we define the product $(l+\overline{m})(p+\overline{q})=lp+\overline{mq}$.

\begin{theorem}An operator $L=A+\overline{B}$ where $A,B\in \mathcal{B}(H^{2}(\mathbb{D}))$ is a simple composition operator if and only if
\begin{equation}\label{s1-9}
L\left ((l+\overline{m})(p+\overline{q}) \right )= \left (L(l+\overline{m})L(p+\overline{q}) \right ),
\end{equation}
\end{theorem}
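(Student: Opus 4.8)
The plan is to prove the two implications separately, deducing the harder (converse) one from the theorem just proved, which characterises simple composition operators by the identities $Ae_{n}=(Ae_{1})^{n}$ and $Be_{n}=(Be_{1})^{n}$. For the forward implication (from $L$ a simple composition operator to (\ref{s1-9})), first I would write $L=C_{\varphi}+\overline{C_{\pi}}$ with $\varphi,\pi:\mathbb{D}\longrightarrow\mathbb{D}$ analytic and recall that each ordinary composition operator is an algebra homomorphism for the pointwise product: for analytic $l,p$ one has $C_{\varphi}(lp)=(lp)\circ\varphi=(l\circ\varphi)(p\circ\varphi)=(C_{\varphi}l)(C_{\varphi}p)$, and since $C_{\varphi}\in\mathcal{B}(H^{2}(\mathbb{D}))$ this product lies in $H^{2}(\mathbb{D})$ whenever $lp$ does; the same holds for $\pi$. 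Then, expanding the left-hand side of (\ref{s1-9}) through the definitions of the product on $HH^{2}(\mathbb{D})$ and of $L=A+\overline{B}$,
\[
L\big((l+\overline{m})(p+\overline{q})\big)=L(lp+\overline{mq})=C_{\varphi}(lp)+\overline{C_{\pi}(mq)}=(C_{\varphi}l)(C_{\varphi}p)+\overline{(C_{\pi}m)(C_{\pi}q)},
\]
whereas the right-hand side is $\big(C_{\varphi}l+\overline{C_{\pi}m}\big)\big(C_{\varphi}p+\overline{C_{\pi}q}\big)=(C_{\varphi}l)(C_{\varphi}p)+\overline{(C_{\pi}m)(C_{\pi}q)}$, again by the definition of the product; the two expressions coincide.

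For the converse I would assume (\ref{s1-9}) and set $\varphi:=Az$ and $\pi:=Bz$, so that $\varphi,\pi\in H^{2}(\mathbb{D})$ and, viewing $e_{1}=z=z+\overline{0}$ and $f_{1}=\overline{z}=0+\overline{z}$, one has $Le_{1}=\varphi+\overline{0}$ and $Lf_{1}=0+\overline{\pi}$. Under the product of $HH^{2}(\mathbb{D})$ the basis vectors satisfy $e_{1}^{k}=e_{k}$ and $f_{1}^{k}=f_{k}$ for every $k$; hence, applying (\ref{s1-9}) repeatedly --- each step legitimate because its left-hand side $Le_{k}=Ae_{k}+\overline{0}$, respectively $Lf_{k}=0+\overline{Be_{k}}$, already belongs to $HH^{2}(\mathbb{D})$ --- yields $Le_{k}=(Le_{1})^{k}$ and $Lf_{k}=(Lf_{1})^{k}$, that is $Ae_{k}=\varphi^{k}=(Ae_{1})^{k}$ and $Be_{k}=\pi^{k}=(Be_{1})^{k}$ for all $k\in\mathbb{N}$. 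These are exactly the hypotheses of the preceding theorem, which then gives that $\varphi$ and $\pi$ map $\mathbb{D}$ into $\mathbb{D}$ and that $L=C_{\varphi}+\overline{C_{\pi}}$, i.e. $L$ is a simple composition operator.

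Both expansions are routine; the one delicate point is that the product on $HH^{2}(\mathbb{D})$ is \emph{a priori} only partially defined, since $H^{2}(\mathbb{D})\cdot H^{2}(\mathbb{D})$ need not sit inside $H^{2}(\mathbb{D})$. I would handle this by reading (\ref{s1-9}) as an identity on those pairs for which both sides are meaningful --- which is all the forward direction uses --- and by observing that the converse argument only involves the basis vectors $e_{n},f_{n}$, for which $e_{n}e_{m}=e_{n+m}$ and $f_{n}f_{m}=f_{n+m}$ remain in $HH^{2}(\mathbb{D})$ automatically. Beyond this bookkeeping I do not anticipate any genuine obstacle.
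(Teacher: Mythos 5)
Your proposal is correct and follows essentially the same route as the paper: the forward direction uses the multiplicativity of $C_{\varphi}$ and $C_{\pi}$ together with the definition of the product on $HH^{2}(\mathbb{D})$, and the converse applies (\ref{s1-9}) to the basis vectors $e_{n}$, $f_{n}$ to obtain $Ae_{n}=(Ae_{1})^{n}$ and $Be_{n}=(Be_{1})^{n}$ and then invokes the preceding characterization. Your added remark about the product being only partially defined on $HH^{2}(\mathbb{D})$ is a sensible clarification that the paper glosses over, but it does not change the argument.
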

\begin{proof}Note that if $L=A+\overline{B}$ then $L\left ((l+\overline{m})(p+\overline{q}) \right )=L(lp+\overline{mq})=A(lp)+\overline{B(mq)}$.
Hence, if $L=C_{\varphi}+\overline{C_{\pi}}$ where $\varphi,\pi : \mathbb{D}\longrightarrow \mathbb{D}$ are analytic functions, then
\begin{align*}
(C_{\varphi}+\overline{C_{\pi}})\left ((l+\overline{m})(p+\overline{q}) \right )&=C_{\varphi}(lp)+\overline{C_{\pi}(mq)}
=(C_{\varphi}l)\,(C_{\varphi}p)+\overline{(C_{\pi}m)\,(C_{\pi}q)} \\
&=(C_{\varphi}l+\overline{C_{\pi}m}\,)(C_{\varphi}p+\overline{C_{\pi}q}\,)=\left ((C_{\varphi}+\overline{C_{\pi}}\,)(l+\overline{m})\right )
\left ((C_{\varphi}+\overline{C_{\pi}}\,)(p+\overline{q})\right ).
\end{align*}

On the other hand, suppose that $L=A+\overline{B}$ satisfies (\ref{s1-9}), then $Le_{n}=Lz^{n}=\underbrace{Lz\cdots Lz}_{n-factors}=(Le_{1})^{n}$ and
$Lf_{n}=L\overline{z^{n}}=L(\overline{z})^{n}=\underbrace{L\overline{z}\cdots L\overline{z}}_{n-factors}=(Lf_{1})^{n}$. So, from the previous lemma
follows that $L$ is a simple composition operator.
\end{proof}

\begin{remark}If $L=A+\overline{B}\in B(HH^{2}(\mathbb{D}))$ it has been pointed out above that $L^{\ast}=A^{\ast}+\overline{B^{\ast}}$. Then
\begin{equation*}
LL^{\ast}(l+\overline{m})=(A+\overline{B})(A^{\ast}+\overline{B^{\ast}})(l+\overline{m})=(A+\overline{B})(A^{\ast}l+\overline{B^{\ast}m})=
(AA^{\ast}l+\overline{BB^{\ast}m}),
\end{equation*}
on the other hand, $L^{\ast}L(l+\overline{m})=A^{\ast}Al+\overline{B^{\ast}Bm}$. It shows that $(LL^{\ast}-L^{\ast}L)e_{n}=(AA^{\ast}-A^{\ast}A)e_{n}$
and $(LL^{\ast}-L^{\ast}L)f_{n}=\overline{(BB^{\ast}-B^{\ast}B)e_{n}}$. Hence, $L$ is a normal operator if and only if $A$ and $B$ are normal operators.
In the particular case in which $L=C_{\varphi+\overline{\pi}}=C_{\varphi}+\overline{C_{\pi}}$, it will be a normal operator if and only if $C_{\varphi}$
and $C_{\pi}$ are normal operators, that is, if and only if $\varphi(z)=\lambda z$ and $\pi(z)=\mu z$ where $|\lambda|\leq 1$ and $|\mu|\leq 1$.
\end{remark}

\section*{Acknowledgment}

L. E. Ben\'{\i}tez-Babilonia was support in part by the Cordoba University and in part by CONACYT grant $45886$. He also thanks the hospitality of CIMAT during
his visit to the Center for some days in June $2019$, period in which this work was completed. The second named author was supported under CONACYT grant $45886$.

\end{document}